\def\public{0}
\theoremstyle{plain} 
\newtheorem{thm}{Theorem}[section] 
\newtheorem{cor}[thm]{Corollary} 
\newtheorem{lemma}[thm]{Lemma}
\newtheorem{prop}[thm]{Proposition} 
\newtheorem*{thm*}{Theorem}
\theoremstyle{remark} 
\newcommand{\Z}{\mathbb{Z}}     
\newcommand{\Q}{\mathbb{Q}} 
\newcommand{\R}{\mathbb{R}} 
\newcommand{\LL}{\mathbb{L}}     
\newcommand{\PP}{\mathbb{P}}     %
\newcommand{\sC}{\mathcal{C}}
\newcommand{\sD}{\mathcal{D}}
\newcommand{\sM}{\mathcal{M}}
\newcommand{\sO}{\mathcal{O}}
\newcommand{\sR}{\mathcal{R}}
\newcommand{\sU}{\mathcal{U}}
\newcommand{\fc}{\mathfrak{c}}
\newcommand{\op}{\overline{p}}
\newcommand{\oC}{\overline{C}}
\newcommand{\bx}{\textbf{x}}
\newcommand{\beq}{\begin{equation}}
\newcommand{\eeq}{\end{equation}}
\newcommand{\steffen}[1]{\marg{\color{red} Steffen: \color{blue} #1}}
\newcommand{\cms}{\overline{\sM}} 
\newcommand{\relmaps}[2]{\cms_{#1}(\PP^1;#2)}
\newcommand{\relrub}[2]{\cms^\sim_{#1}(\PP^1;#2)}
\newcommand{\HH}[2]{\mathbb{H}_{#1}(#2)}     
\newcommand{\relrubU}[2]{\overline{\mathcal{U}}^\sim_{#1}(#2)}
\newcommand{\marg}[1]{}
\newcommand{\marg}[1]{\normalsize{{\color{red}\footnote{{\color{blue}#1}}}{\marginpar[{\color{red}\hfill\tiny\thefootnote$\rightarrow$}]{{\color{red}$\leftarrow$\tiny\thefootnote}}}}}
\newcommand{\ch}{{\rm CH}}
\newcommand{\br}{{\rm br}}
\newcommand{\pt}{{\rm pt}}
\providecommand{\stab}{{\rm stab}}
\providecommand{\sst}{{\rm ss}}
\newcolumntype{S}{>{\centering\arraybackslash} m{3in} }
\newcolumntype{U}{>{\arraybackslash} m{2in} }
\newcolumntype{T}{>{\centering\arraybackslash} m{1 in} }
\begin{document}

%
%
\title{A geometric perspective on the piecewise polynomiality of double Hurwitz numbers}
\date{\today}
\author[R. Cavalieri]{Renzo Cavalieri}
\address{Renzo Cavalieri, Colorado State University, Department of Mathematics, Weber Building, Fort Collins, CO 80523, U.S.A}
\thanks{Research of R.C. supported by NSF grant DMS-1101549, NSF RTG grant 1159964 }
\email{renzo@math.colostate.edu}
\author[S. Marcus]{Steffen Marcus}
\address{Steffen Marcus, Department of Mathematics, University of Utah, 155 S 1400 E Room 233, Salt Lake City, UT 84112, U.S.A}
\subjclass[2010]{14N35}
\email{marcus@math.utah.edu}
\begin{abstract}
We describe double Hurwitz numbers as intersection numbers on the moduli space of curves $\cms_{g,n}$. Assuming polynomiality of the Double Ramification Cycle (which is known in genera $0$ and $1$), our formula explains the polynomiality in chambers of double Hurwitz numbers, and the wall crossing phenomenon in terms of a variation of correction terms to the $\psi$ classes.  We interpret this as suggestive evidence for polynomiality of the Double Ramification Cycle.
\end{abstract}
\maketitle
\setcounter{tocdepth}{1}
\setcounter{table}{0}
%


\section{Introduction}

This article investigates the piecewise polynomiality of double Hurwitz numbers (see Section~\ref{sec:dhn} for background) from a geometric perspective. The combinatorial aspects of this theory have been extensively studied in \cite{gjv:dhn}, \cite{ssv:gz} and \cite{cjm:wc}, where the chambers of polynomiality and inductive wall crossing formulae are explicitly described. However, double Hurwitz numbers arise naturally from geometry, as the degree of  a zero dimensional cycle: the pullback of a point via the branch map from a compactification of a Hurwitz space to the corresponding moduli space of branch divisors.  It is therefore a natural question whether one could use this cycle to extract information about intersection theory on some classical moduli space. Further corroborating this question is the analogous case of simple Hurwitz numbers, counting covers of $\PP^1$ with one point of specified special ramification. Here the celebrated ELSV formula \cite{ELSV2}, which expresses these numbers as  tautological intersection numbers on the moduli spaces of curves, has at the same time explained the combinatorial properties of the Hurwitz numbers and provided a wealth of remarkable geometric consequences. One among all, the ELSV formula is a key ingredient to Okounkov and Pandharipande's proof of Witten's conjecture \cite{op:witten}.
The fact that double Hurwitz numbers share simlar combinatorial properties has led Goulden, Jackson and Vakil to conjecture the existence of an {\it ELSV formula for double Hurwitz numbers}, in the form of an intersection of tautological classes on some (family of) compactification(s) of the Picard Stack (\cite{gjv:dhn}, Conjecture 3.5) . To this day such a formula has not been found.

In this article we take a different approach, and express the double Hurwitz number as an intersection of tautological classes on $\cms_{g,n}$ ({\bf Proposition \ref{prop:formula}}). We interpret the double Hurwitz zero dimensional cycle essentially as the top intersection of $x_i\tilde\psi_i$, a psi class on the moduli space of rubber relative stable maps to $\PP^1$. We then pushforward this expression to the moduli space of curves via the stabilization morphism, obtaining a formula in terms of tautological intersections on $\cms_{g,n}$. There are a few fundamental facts that combine to show that our formula  explains the chamber structure and piecewise polynomiality of  double Hurwitz numbers ({\bf Corollary \ref{cor:poly}}). First, psi classes on spaces of relative stable maps equal the pullback of psi classes on the moduli spaces of curves plus some chamber dependent boundary corrections. Therefore, projection formula gives us the double Hurwitz number as intersection numbers of psi classes and cycles $c_\Delta$ supported on chamber-dependent boundary strata $\Delta$. For a given $\Delta$ in $\cms_{g,n}$ such that the dual graph has genus $l$, the cycle $c_\Delta$ is obtained as follows: there is a finite collection of boundary strata in the space of maps whose pushforward is supported on $\Delta$. Such a collection is parameterized by the lattice points of an $l$-dimensional polytope 
$P_\Delta$ whose faces are given by homogeneous linear equations in the entries of $\bx$. From each such boundary stratum we pushforward the restriction of the virtual fundamental class $[\relmaps{g}{\bx}]^{vir}$.  When doing so, ghost automorphisms in the  boundary of the space of relative stable maps give a contribution for each node of $\Delta$ that is a linear factor in the coordinates of $\bx$ and of the ambient lattice for $P_\Delta$. If we assume that the pushforward of the virtual class of the moduli space of rubber relative stable maps  of  a given genus $g$ is a polynomial class of degree $2g$, some elementary bookkeeping shows that the coefficient of $\Delta$ is a polynomial class of degree $4g-3+n$.

The polynomiality of the pushforward of the virtual class of relative stable maps is not just wishful thinking. In recent years such class has been studied intensely by several groups of mathematicians from different areas, perhaps due to Eliashberg's request for a working understanding of  the {\it double ramification cycle} $\HH{g}{\bx}:=\stab_\ast[\relmaps{g}{\bx}]^{vir}$, which should play a key role in symplectic field theory \cite{egh:sft}. In \cite{H11}, Hain showed that the restriction of $\HH{g}{\bx}$ to $\sM_{g,n}^{ct}$ (curves of compact type) is a homogeneous polynomial of degree $2g$ (sometimes called the {\it Hain class}). Grushevski and Zakharov (\cite{GZ1,GZ2}) extend further this class to a larger partial compactification of $\sM_{g,n}$, including curves with one loop in the dual graph: the class is still a polynomial of degree $2g$, but it is no longer homogeneous. Finally Buryak-Shadrin-Spitz-Zvonkine \cite{bssz:psi} study the intersections of $\HH{g}{\bx}$ with monomials in psi classes, obtaining results that are consistent with the polynomiality assumption. These authors also use in an essential way the comparison of psi classes among  the various moduli spaces in question. We remark that polynomiality of the double ramification cycle holds trivially in genus $0$ and is true in genus $1$. It is also interesting that the correction between the Hain class and the double ramification cycle is very suggestively just the class $\lambda_1$, with constant coefficient $1$.

Analyzing the chamber dependence of our formula for double Hurwitz numbers, we establish an intersection theoretic formula for the wall crossings ({\bf{Theorem \ref{thm:wc}}}).  We show it is controlled by the pushforward of a specific divisor in the moduli space of relative stable maps, which we call the wall-crossing divisor. In the rational case, a considerably simpler geometric wall-crossing formula is expressed ({\bf Proposition \ref{prop:gz}}) in terms of a unique irreducible divisor $D$ (Figure \ref{fig:genus0}).

The paper is organized as follows. Section~\ref{sec:setup} contains basic definitions and background information needed to build up diagram \eqref{diag:central}, which plays a key role in this work.  In Section~\ref{sec:vanishing} we provide technical vanishing lemmas for the pushforwards of boundary loci in $\relrub{g}{\bx}$ through the stabilization morphism.  
In Section~\ref{sec:psi} we compare pullbacks and pushforwards of the various cotangent line bundle classes along these maps. Most of the results in Section~\ref{sec:psi} are already found in \cite{ion:tc}, \cite{bssz:psi}, but we present some streamlined proofs in algebro-geometric language. In section \ref{sec:WC} we bring everything together to provide the geometric realization of double Hurwitz numbers and their wall crossings. 

\subsection{Acknowledgements}
We are especially grateful to Arend Bayer, who actively collaborated with the first author at the initial stages of the project. Many helpful discussions with other mathematicians, including Brian Ossermann, Sergey Shadrin, Jonathan Wise, Dmitri Zakharov and  Dimitri Zvonkine have helped shaping this work into its current form. We are grateful to AIM that brought us all together in the February 2012 workshop on integrable systems in Gromov-Witten and symplectic field theory to exchange ideas and points of view on the double ramification cycle.

\section{Setup}
\label{sec:setup}
In this section we quickly recall the necessary background in double Hurwitz theory and we explain the objects involved in the central diagram \eqref{diag:central} inspiring Theorem~\ref{cor:poly}. 

\subsection{Double Hurwitz numbers} 
\label{sec:dhn}
Fix $n\in\Z_{>0}$ a positive integer and let \begin{equation}\displaystyle{\bx}\in\Z^n_0=\left\{\bx\in\Z^n|\sum_ix_i=0\right\}\end{equation} be an $n$-tuple of integers summing to zero.  Denote by $\bx_{0}$ and $\bx_{\infty}$ the tuples given by the positive and negative components of $\bx$ respectively.  Double Hurwitz numbers $H_g(\bx)$ are invariants giving a (automorphism-weighted) count of the genus $g$ covers of $\PP^1$ with ramification profiles over $0$ and $\infty$ prescribed by $\bx_0$ and $\bx_\infty$.  These numbers determine a function 
\begin{equation}
H_g:\Z^n_0\longrightarrow\Q.  
\end{equation}
on the integral lattice of zero-sum $n$-tuple of integers.
Goulden, Jackson, and Vakil \cite[Theorem~2.1]{gjv:dhn}, show that $H_g(\bx)$ is piecewise polynomial of degree $4g-3+n$.  A complete combinatorial description of this piecewise polynomiality behavior is given by Shadrin--Shapiro--Vainshtein \cite{ssv:gz} in genus 0 and by Cavalieri--Johnson--Markwig\cite{cjm:wc} in full genera.  For $I$ ranging among the proper subsets of $\{1,\ldots,n\}$, the hyperplanes 
\begin{equation}W_I=\left\{\sum_{i\in I}x_i=0\right\}\end{equation}
form walls in $\R^n$ defining the  chambers of polynomiality $\mathfrak{c}$ as the connected components of the complement. Explicit wall-crossing formulae in \cite{ssv:gz}, \cite{cjm:wc}, are modular and inductive, in the sense that they describe the variation of a Hurwitz polynomial across a wall in terms of products of double Hurwitz numbers with smaller invariants.

\subsection{Curves}
The Deligne-Mumford compactification $\cms_{g,n}$ of the moduli space of curves parameterizes families of stable projective genus $g$ algebraic curves with $n$ marked points and has dimension $3g-3+n$.  Stability is defined to be ampleness of the log canonical divisor $\omega_C+\sum_ip_i$.  Much of the intersection theory of this moduli space is captured by the tautological ring $\sR^\ast(\cms_{g,n})\subset \ch^\ast(\cms_{g,n})$, a naturally defined subring containing most of the known geometrically defined Chow classes.  These include the cotangent line bundle classes
\begin{equation}\psi_i \in \sR^1(\cms_{g,n}),\, i=1,\ldots, n\end{equation}
the Chern classes of the Hodge bundle
\begin{equation}\lambda_j\in \sR^j(\cms_{g,n}),,\, j=1,\ldots,g\end{equation}
and the various boundary strata.

\subsection{Losev-Manin spaces}One may alter the stability condition by assigning weights $\left\{a_i\right\}_{i=1}^n$ to the marked points and requiring $\omega_C+\sum_ia_ip_i$ to be ample.  This produces a moduli space $\cms_{g,n}(a_1,\ldots,a_n)$ of weighted stable curves (\cite{h:wsc}) with various new combinatorial properties.  In genus $0$, assigninging  weight one to two points denoted $0$ and $\infty$  and giving an infinitesimally small weight $\epsilon$ to the remaining $r$ others results in such a space, also known as Losev-Manin space (\cite{lm:lms}).  We work with its quotient by the symmetric group action forgetting the ordering of the $r$ ``shadow'' points:
\begin{equation}\cms_{\br}:=\left[\cms_{0,2+r}(1,1,\epsilon,\ldots,\epsilon)/\mathcal{S}_r\right].\end{equation}  There is a natural contraction morphism $c: \cms_{0,n}\to \cms_{0,2+r}(1,1,\epsilon,\ldots,\epsilon)$   and the cotangent line bundle classes $\widehat{\psi}_0$ and $\widehat{\psi}_\infty$ at the fully weighted special points pullback via $c$ to the corresponding ordinary $\psi$ classes on $\cms_{0,n}$.

\subsection{Relative maps to $\PP^1$}
The moduli space \begin{equation}\relmaps{g}{\bx}:=\relmaps{g}{\bx_0[0],\bx_\infty[\infty]}\end{equation} of relative stable maps (\cite{GV05}) to $\PP^1$ parameterizes degree $d$ stable maps relative to the points $0$ and $\infty$ with prescribed ramification given by $\bx_0$ and $\bx_\infty$ respectively.  We consider the variant of this space $\relrub{g}{\bx}$ in which the target is an unparameterized or ``rubber" $\PP^1$.  Closed points in this space are branched degree $d$ maps $f:C\to T$ to a semi-stable chain $T$ of projective lines with the appropriate ramification over the two special points, which lie on the external components of $T$.  The pre-images of the relative divisors $0$ and $\infty$ are considered marked. For each such marked point $i$ we have a cotangent line bundle class that we denote $\widetilde{\psi}_i$.
This moduli space admits a virtual fundamental class of  dimension $2g-3+n$.



\subsection{The central diagram}
The relative stable maps space $\relrub{g}{\bx}$ admits a natural stabilization morphism 
\begin{equation}
\stab:\relrub{g}{\bx}\to\cms_{g,n}
\end{equation}
defined by sending a relative stable map $f:C\to T$ to the Deligne-Mumford stabilization $\oC$ of the source curve.  In \cite[Theorem~1]{FP05}, Faber and Pandharipande show that the pushforward 
\begin{equation}
\HH{g}{\bx}:=\stab_\ast\left[\relrub{g}{\bx}\right]^{\rm vir}
\end{equation}
is a tautological class of codimension $g$. In \cite{GV03} this is called the \emph{double Hurwitz class}. More recently the name {\it double ramification cycle} has been adopted in  \cite{GZ1,GZ2} and \cite{bssz:psi} for the cycle naturally representing $\HH{g}{\bx}$.

The space $\relrub{g}{\bx}$ also admits a natural branch morphism 
\begin{equation}
\br:\relrub{g}{\bx}\to\cms_{\br}
\end{equation}
defined by sending a map $f:C\to T$ to the expanded target $T$ with the branch divisor away from $0$ and $\infty$ marked by the $r=2g-2+n$ lightly weighted markings.  The double Hurwitz number $H_g(\bx)$ is the degree of the branch morphism:
\begin{equation}
H_g(x)= \int_{[\relrub{g}{\bx}]^{vir}}br^\ast([pt]).
\end{equation}

We use the diagram
\begin{equation}\label{diag:central}
\xymatrix{\relrub{g}{\bx} \ar[r]^{stab} \ar[d]^{br} & \cms_{g,n} \\
\cms_{br}
}
\end{equation}
to espress $H_g(\bx)$ as an intersection number on $\cms_{g,n}$ as:
\beq
\label{eq:dhnif}
H_g(\bx)[pt.]= \stab_\ast(\br^\ast([pt.])).
\eeq


\section{Vanishing of boundary loci}\label{sec:vanishing}

We now provide a statement about the vanishing of boundary loci in $\relrub{g}{\bx}$.  Let $\tilde{\Delta}_{g,k}$ denote an irreducible codimension $k$ boundary stratum in $\relrub{g}{\bx}$ described generically by a relative stable map to a $k$-th expansion $T$ of $\PP^1$ (see Figure~\ref{fig1}), and denote $\tilde{\Delta}_{g,k}^{vir}$ the class obtained by capping it with the virtual fundamental class of the moduli space of maps.  

\begin{figure}[h]
\begin{center}
		\includegraphics[width=\textwidth]{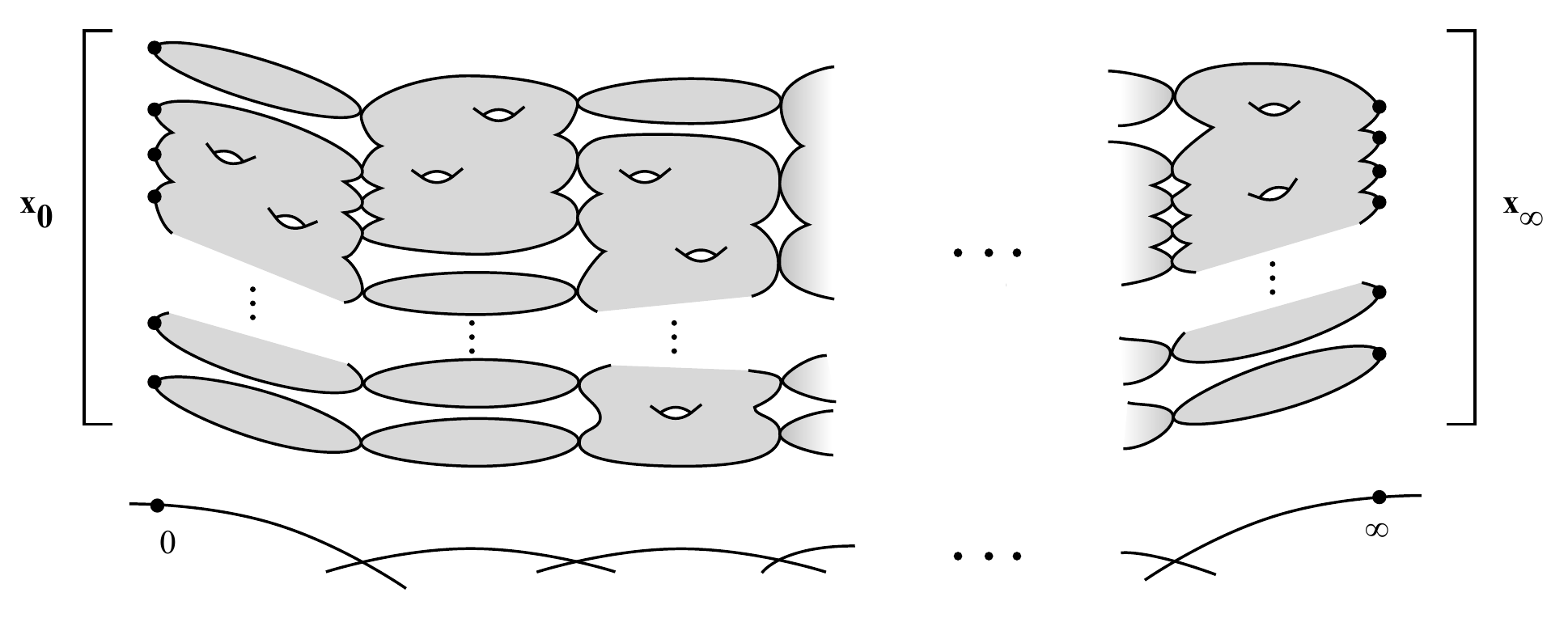}
\caption{The generic relative stable map of an irreducible codimension $k$ boundary stratum $\tilde{\Delta}_{g,k}$ of $\relrub{g}{\bx}$.  The target chain of rational curves has $k+1$ components.}
\label{fig1}
\end{center}
\end{figure}


To begin, we introduce notation that allows us to discuss the parts of Figure~\ref{fig1} in a meaningful way.  For a fixed locus $\tilde{\Delta}_{g,k}$, denote by $(f_\eta:C_\eta \to T)\in\tilde{\Delta}_{g,k}$ the generic relative stable map characterizing $\tilde{\Delta}_{g,k}$, and let $\pi:C_\eta\to\oC_\eta$ be its Deligne-Mumford stabilization.  Recall, $C_\eta$ is a nodal pre-stable curve, $\oC_\eta:=\stab(f_\eta)$ is a nodal stable curve, and $T\in\sM^\sst$ is an unparameterized chain of $k$ projective lines.  

Call an irreducible component $C'\in C_\eta$ \emph{trivial} if it is contracted by $\pi$.  Non-contracted irreducible components are called \emph{non-trivial}.  The trivial components are precisely those that map via $f_\eta$ as a Galois cover to a single component of $T$, fully ramified over 2 branch points. Similarly, for each node $\op\in\oC_\eta$ in the stabilization, we call the pre-images $\pi^{-1}(\op)$ \emph{non-trivial} nodes of $C_\eta$.  Notice that a non-trivial node can either be an isolated node or a chain of trivial components that stabilize to a node.  A node $p\in C_\eta$ is called \emph{trivial} if it is not a non-trivial node (see Figure~\ref{fig2} for a labeling of such parts).  Denote by $\gamma$ the number of non-trivial components of $C_\eta$ and $\delta$ the number of non-trivial nodes.  We are also concerned with the number $l$ of loops in $C_\eta$, that is, the number of minimal cycles in its dual graph.

\begin{figure}[h]
\begin{center}
		\includegraphics[width=\textwidth]{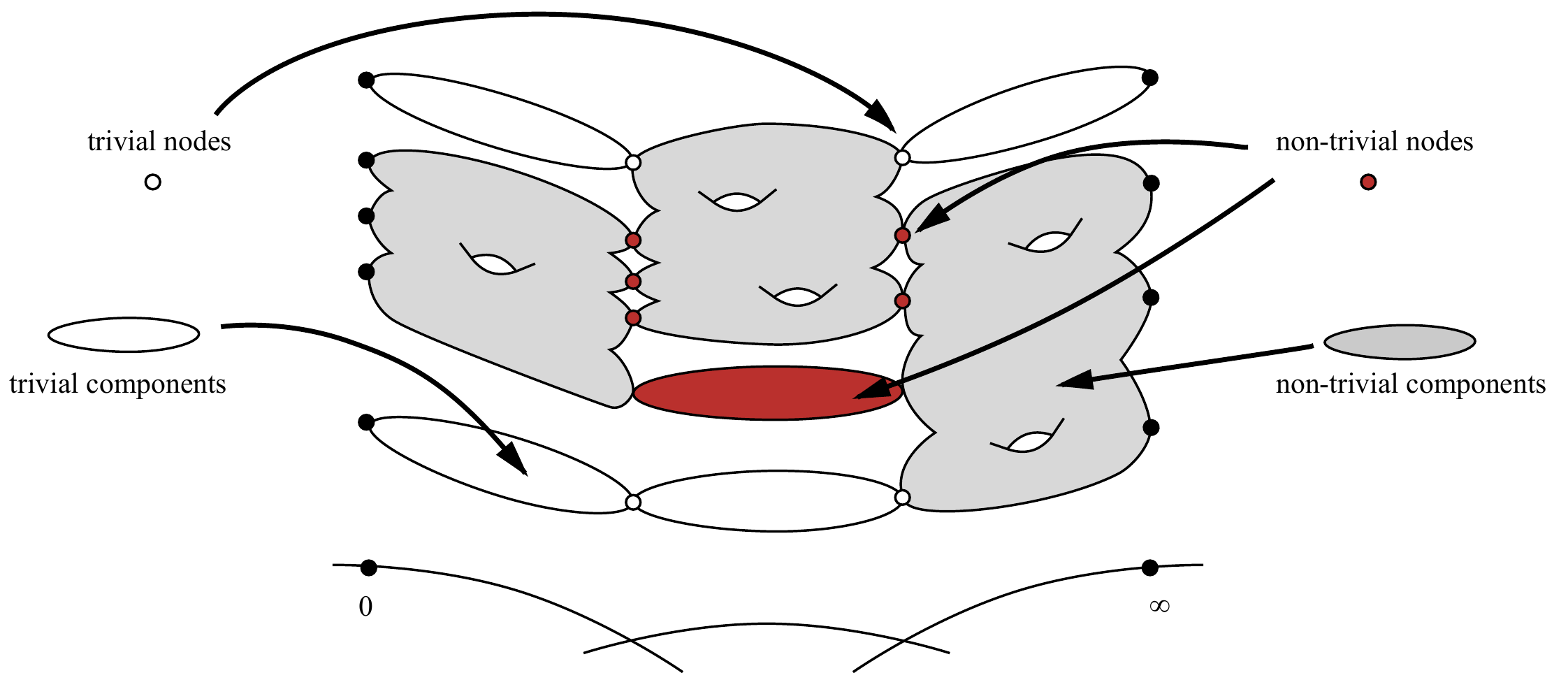}
\caption{A diagram labeling trivial and non--trivial nodes and components.}
\label{fig2}
\end{center}
\end{figure}

In this section we give a combinatorial conditions for the vanishing of the pushforward $\stab_\ast\tilde{\Delta}_{g,k}^{vir}$. We warm up recalling the statement in genus $0$.


\begin{lemma}[\cite{BCM}]
Let $\tilde{\Delta}_{0,k}\subset\relrub{0}{\bx}$ be a boundary stratum described generically as having an expanded target $T$ with $k$ nodes, as in Figure~\ref{fig1}.  Then $\stab_\ast\left[\tilde{\Delta}_{0,k}\right]\neq 0$ if and only if $\delta=k$.
\end{lemma}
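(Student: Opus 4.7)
The plan is to prove the lemma by a dimension count. Since genus $0$ is unobstructed, $\relrub{0}{\bx}$ has dimension $n-3$, and the codimension-$k$ stratum $\tilde{\Delta}_{0,k}$ therefore has dimension $n-3-k$. Under $\stab$, $\tilde{\Delta}_{0,k}$ maps into the closure of the boundary stratum $\Delta\subseteq\cms_{0,n}$ whose dual graph matches that of the generic stabilized curve $\oC_\eta$; since $\oC_\eta$ has exactly $\delta$ nodes by definition, $\dim\Delta=n-3-\delta$.

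The central technical step is to show that $\stab|_{\tilde{\Delta}_{0,k}}$ is generically surjective onto $\Delta$. I would argue constructively: given a generic $[\oC']\in\Delta$ whose dual graph matches that of $\oC_\eta$, I would assemble a preimage in $\tilde{\Delta}_{0,k}$ by placing Hurwitz covers with the ramification profiles prescribed by $\bx$ on each non-trivial component of $\oC'$, then inserting chains of Galois (fully ramified) trivial components over intermediate target components whenever the combinatorial type of $\tilde{\Delta}_{0,k}$ prescribes the corresponding non-trivial node of $\oC_\eta$ to be a chain rather than an isolated node. Consistency of the resulting rubber relative stable map follows from the compatibility of ramification data built into the combinatorial type of the stratum.

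Granted surjectivity, dimension comparison forces $\delta\geq k$: otherwise $\dim\tilde{\Delta}_{0,k}<\dim\Delta$ and the source would be too small to surject. The pushforward $\stab_\ast[\tilde{\Delta}_{0,k}]$ is non-zero precisely when $\stab|_{\tilde{\Delta}_{0,k}}$ is generically finite onto its image, which amounts to $\dim\tilde{\Delta}_{0,k}=\dim\Delta$, i.e., $\delta=k$. In the remaining case $\delta>k$, the fibers of $\stab|_{\tilde{\Delta}_{0,k}}$ have positive dimension $\delta-k$ and the pushforward vanishes.

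The hard part will be making the surjectivity step rigorous, since it requires carefully assembling a relative stable map from the abstract curve data on $\oC'$ in the presence of chains of trivial components and verifying that the stabilization recovers $\oC'$ on the nose. Geometrically, the positive-dimensional fibers in the $\delta>k$ case are easy to picture: any target component carrying two or more non-trivial source components carries genuine moduli in the relative position of the independent sets of branches, moduli which are forgotten by $\stab$, and it is exactly this redundancy that drives the vanishing of the pushforward.
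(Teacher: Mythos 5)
Your proposal is correct and follows essentially the same route as the paper: both arguments rest on the observation that $\relrub{0}{\bx}$ and $\cms_{0,n}$ share the dimension $n-3$, so the pushforward of the codimension-$k$ stratum is nonzero exactly when its image, which lies in the closure of the codimension-$\delta$ boundary stratum of $\cms_{0,n}$, itself has codimension $k$. The only difference is one of emphasis: you spell out the generic surjectivity of $\stab|_{\tilde{\Delta}_{0,k}}$ onto that stratum (needed to exclude $\delta<k$), a point the paper compresses into an appeal to ``the well known combinatorial description of the boundary strata of $\cms_{0,n}$.''
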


\begin{proof}
In this case ${\rm dim}\left(\relrub{0}{\bx}\right)=\dim\left(\cms_{0,n}\right)=n-3$, thus $\stab_\ast\left[\tilde{\Delta}_{0,k}\right]\neq0$ if and only if $\stab(\tilde{\Delta}_{0,k})$ is a locus of codimension $k$.  By the well known combinatorial description of the boundary strata of $\cms_{0,n}$, the stabilization $\oC_\eta$ must have precicely $k$ nodes, i.e. $\delta=k$.  Note that, since a genus 0 pre-nodal curve is compact type we must have $l=0$ and $\gamma=k+1$ non-trivial components, one for each irreducible component in the expanded target $T$.  \end{proof}

This generalizes to the following statement in higher genus.

\begin{lemma}
\label{lem:bound}
Let $\tilde{\Delta}_{g,k}\subset\relrub{g}{\bx}$  be a boundary stratum described generically as having an expanded target $T$ with $k$ components, as in Figure~\ref{fig1}.  Then $\stab_\ast\left[\tilde{\Delta}_{g,k}^{vir}\right]\neq 0$ if and only if $\delta=l+k$.
\end{lemma}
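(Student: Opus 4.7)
My plan is a dimension count that mimics the genus-$0$ argument above but exploits the codimension of the double ramification cycle on the non-trivial components of the stabilized source curve.

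Since $\tilde\Delta_{g,k}$ has codimension $k$ in $\relrub{g}{\bx}$ and the latter has virtual dimension $2g-3+n$, the class $[\tilde\Delta_{g,k}^{vir}]$ has virtual dimension $2g-3+n-k$, so $\stab_\ast[\tilde\Delta_{g,k}^{vir}]$ is a codimension-$(g+k)$ class in $\cms_{g,n}$. The image $\stab(\tilde\Delta_{g,k})$ sits in the boundary stratum $B_\Gamma\subset\cms_{g,n}$ corresponding to the dual graph $\Gamma$ of the generic stabilization $\oC_\eta$; by construction $|E(\Gamma)|=\delta$ and $h_1(\Gamma)=l$, so $B_\Gamma$ has codimension $\delta$. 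I would then refine this using the factorwise structure of $B_\Gamma$: each non-trivial component $C_v\subset\oC_\eta$ of genus $g_v$ admits a cover of $\PP^1$ with ramification profile $\bx_v$, hence its class in $\cms_{g_v,n_v}$ lies on $\HH{g_v}{\bx_v}$, which has codimension $g_v$. Gluing along the $\delta$ nodes of $\Gamma$ and using $\sum_v g_v = g-l$, I conclude that $\stab(\tilde\Delta_{g,k})$ is contained in a sub-locus of $\cms_{g,n}$ of codimension $\delta+(g-l)$.

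For the codimension-$(g+k)$ pushforward to be non-zero, its support must have codimension at most $g+k$, i.e., $\delta+(g-l)\le g+k$, forcing $\delta\le l+k$. When equality holds, the codimensions match and a product/degeneration argument identifies $\stab_\ast[\tilde\Delta_{g,k}^{vir}]$ with a non-zero combinatorial multiple of the pushforward of $\prod_v\HH{g_v}{\bx_v}$ under the gluing morphism; non-vanishing then follows from non-vanishing of the product of double ramification cycles.

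The main obstacle I foresee is ruling out $\delta<l+k$, since here the dimension count alone leaves room for the class to survive as a sub-class of strictly positive codimension inside the DR-glued support. The key point is that the actual dimension of $\tilde\Delta_{g,k}$ exceeds its virtual dimension by precisely the number of trivial components in the generic source, each contributing a $1$-dimensional obstruction that is cancelled by a ghost $\mathbb{G}_m$-automorphism of an intermediate target component. Equivalently, when $\delta<l+k$ the restriction $\stab|_{\tilde\Delta_{g,k}}$ acquires a generic fiber of dimension $l+k-\delta$, parameterizing lattice rotations of loops along the target chain in the spirit of the polytope $P_\Delta$ described in the introduction. Applying the projection formula to the Euler class of the resulting excess obstruction bundle then forces the virtual pushforward to vanish. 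Making this obstruction-theoretic step clean is the technical heart of the proof; everything else is bookkeeping.
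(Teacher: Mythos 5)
Your overall strategy---reduce to the double ramification cycles of the non-trivial components and compare codimensions---points in the right direction, and your sufficiency argument (identifying the pushforward with a glued product of classes $\HH{g_v}{\bx_v}$ and invoking their non-vanishing) matches the paper's. But the necessity argument has a genuine gap. You bound the codimension of the \emph{support} of $\stab_\ast[\tilde{\Delta}_{g,k}^{vir}]$ by asserting that each stabilized non-trivial component lies in a codimension-$g_v$ locus because $\HH{g_v}{\bx_v}$ has codimension $g_v$. The cycle class has codimension $g_v$, but the set-theoretic image of $\stab\colon\relrub{g_v}{\bx_v}\to\cms_{g_v,n_v}$ need not: the rubber space has boundary components of excess dimension whose images can sweep out loci of codimension less than $g_v$, and the virtual class kills those contributions only at the level of cycles, not of supports. (Controlling the codimension of that image is essentially the content of the lemma itself, applied to the boundary strata of $\relrub{g_v}{\bx_v}$, so as written the argument is close to circular.) The paper sidesteps this by factoring $\stab|_{\tilde{\Delta}_{g,k}}$ through the map $p$ to the fiber product $\relrub{g_1}{\bx_1}\times_{\sM_1}\cdots\times_{\sM_{\gamma-1}}\relrub{g_\gamma}{\bx_\gamma}$ and observing that $p$ has positive-dimensional fibers---hence kills the virtual pushforward---unless each component of the expanded target carries exactly one non-trivial source component, i.e.\ $\gamma=k+1$.

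The second problem is that the case $\delta<l+k$, which you call ``the technical heart of the proof'' and propose to handle with an excess obstruction bundle and ghost automorphisms, is vacuous. The dual graph of $\oC_\eta$ has $\gamma$ vertices and $\delta$ edges, so $l=\delta-\gamma+1$ identically, i.e.\ $\delta=l+\gamma-1$; and since each of the $k+1$ components of the expanded target must support at least one non-trivial source component (otherwise that component of the expansion could be contracted, violating stability), one has $\gamma\ge k+1$ and hence $\delta\ge l+k$ always. Once this is observed, a correct proof that non-vanishing forces $\delta\le l+k$ finishes necessity outright, and the obstruction-theoretic machinery you sketch for $\delta<l+k$---which in any case is too vague to verify---is not needed. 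I would replace both the support bound and the excess-bundle discussion with the identity $\delta=l+\gamma-1$ together with the fiber-product factorization.
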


\begin{proof}

For each $i=1,\ldots,\gamma$ let  $C_i$ be the collection of non-trivial components of $C_\eta$, $g_i$ their respective genus, and $T_i$ the irreducible component of $T$ onto which $C_i$ surjects via $f_\eta$.  For each $i$, the map $f_\eta$ restricted to $C_i$ describes the generic point of a  moduli space $\relrub{g_i}{\bx_i}$ of relative maps to $T_i\cong \PP^1$ where $\bx_i$ is the vector of integers recording the ramification profile of this restriction over the two special points of $T_i$.  The locus $\tilde{\Delta}_{g,k}$ maps to a fiber product of moduli spaces
\beq
\label{push}
\xymatrix{
\tilde{\Delta}_{g,k}\ar[r]_p \ar[d]_{stab}  &  \mathcal{M}:= \ar[dl]^{\prod stab_i}& \hspace{-1cm}\relrub{g_1}{\bx_1}\times_{\sM_1}\relrub{g_2}{\bx_2}\times_{\sM_2}\cdots\times_{\sM_{\gamma-1}}\relrub{g_{\gamma}}{\bx_\gamma}\\
 \cms_{g,n} &  & 
}
\eeq

\noindent where the $\sM_i$ are zero dimensional moduli spaces of maps of non-trivial nodes along which our maps glue to form the objects of $\relrub{g}{\bx}$. Diagram \eqref{push} is natural with respect to virtual classes, hence we have that $\stab_\ast(\tilde{\Delta}_{g,k}^{vir})$ factors through the map $p$. It is immediate to see that $p$ has positive dimensional fibers unless there is precisely one non-trivial component over each component of the expanded target. A necessary condition for $\stab_\ast(\tilde{\Delta}_{g,k\tilde{\Delta}}^{vir})\not= 0$ is then that $\gamma=k+1$.
Further, we have that $l= \delta -(\gamma-1)$, which immediately implies $\delta= l+k$.

Note that the dimension of $\stab(\tilde{\Delta}_{g,k}^{vir})$ is
\begin{equation*}
\sum_{i=1}^{k+1} \left(2g_i-3+l(\bx_i)\right)=2(g-l)-3\gamma+\sum_{i=1}^\gamma l(\bx_i)= 2g-3+n,
\end{equation*}
where the last equality follows from
\begin{equation}
\sum_{i=1}^\gamma l(\bx_i)=n+2\delta
\end{equation}
since the parts of the tuples $\bx_i$ corresponding to non-trivial nodes are counted twice, once for each of the two non-trivial components branching at that node, and the rest correspond exactly to the original ramification data $\bx$.  

To conclude our proof it suffices to observe that $\stab_\ast(\tilde{\Delta}_{g,k}^{vir})$ is a double Hurwitz class on each of the factors of the strata that support it, and is therefore non-zero as the double Hurwitz classes aren't.

\end{proof}

\section{Comparisons lemmas}\label{sec:psi}
This section contains the technical lemmas needed to compare and evaluate the various $\psi$ classes appearing in diagram \eqref{diag:central}.

\begin{lemma}\label{lem:ev}
Denote by $\hat{\psi_0}\in \sR^1(\cms_{\br})$ the $\psi$ class corresponding to one of the two points with weight one. Then
\beq\widehat{\psi}_0^{2g-3+n}=\dfrac{1}{r!}[\text{pt.}]\eeq
\end{lemma}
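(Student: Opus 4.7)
The strategy is to reduce the computation to the classical top intersection of a $\psi$ class on $\cms_{0,r+2}$ by working upstairs before taking the $\mathcal{S}_r$-quotient, and then divide by the order of the symmetric group.

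First I would note that $r = 2g-2+n$, so $\dim \cms_{\br} = r-1 = 2g-3+n$ and the class in question is top-dimensional. Write $L_r := \cms_{0,2+r}(1,1,\epsilon,\ldots,\epsilon)$ for the Losev--Manin space, so that $\cms_{\br} = [L_r/\mathcal{S}_r]$, and consider the contraction morphism $c:\cms_{0,r+2}\to L_r$ introduced in the setup section. This morphism is birational (it contracts the boundary divisors on which two light markings collide, while leaving the geometry near the heavy point $0$ unchanged), and by the quoted property $c^\ast \widehat{\psi}_0 = \psi_0$.

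Next I would compute the integral on $L_r$. Since $c$ is birational we have $c_\ast[\cms_{0,r+2}] = [L_r]$, so by the projection formula
\[
\int_{L_r}\widehat{\psi}_0^{\,r-1} \;=\; \int_{\cms_{0,r+2}} c^\ast \widehat{\psi}_0^{\,r-1} \;=\; \int_{\cms_{0,r+2}} \psi_0^{\,r-1} \;=\; 1,
\]
where the last equality is the classical top intersection on $\cms_{0,r+2}$ (of dimension $r-1$), obtainable from repeated application of the string equation. Because $\widehat{\psi}_0$ is the cotangent line class at a heavy marked point, it is manifestly $\mathcal{S}_r$-invariant, so it descends to a well-defined class on the quotient stack, which by abuse of notation is still called $\widehat{\psi}_0$.

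Finally, passing to the stack quotient divides degrees by the order of the group: if $\pi:L_r\to[L_r/\mathcal{S}_r]=\cms_{\br}$ denotes the quotient map, then $\pi^\ast\widehat{\psi}_0 = \widehat{\psi}_0$ and
\[
\int_{\cms_{\br}}\widehat{\psi}_0^{\,r-1} \;=\; \frac{1}{r!}\int_{L_r}\widehat{\psi}_0^{\,r-1} \;=\; \frac{1}{r!},
\]
which is exactly the statement that $\widehat{\psi}_0^{\,2g-3+n}=\tfrac{1}{r!}[\text{pt.}]$. There is no real obstacle here; the only subtlety worth double-checking is the sign/normalization of the pushforward through the quotient and the agreement $c^\ast\widehat{\psi}_0=\psi_0$ on the nose (not merely up to boundary corrections), which follows from the fact that the contraction $c$ is an isomorphism in a neighborhood of the marking $0$.
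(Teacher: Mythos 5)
Your proof is correct and follows essentially the same route as the paper's: pull $\widehat{\psi}_0$ back along the contraction $c:\cms_{0,r+2}\to\cms_{0,2+r}(1,1,\epsilon,\ldots,\epsilon)$ to an ordinary $\psi$ class, evaluate the top self-intersection there as $1[\text{pt.}]$, and account for the $\mathcal{S}_r$-quotient with the factor $1/r!$. The additional justifications you supply (birationality of $c$ and the projection formula, the string-equation evaluation, and the observation that $c^\ast\widehat{\psi}_0=\psi_0$ holds on the nose because the heavy point never lies on a contracted component) are exactly the steps the paper leaves implicit.
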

\begin{proof}
Consider the contraction morphism 
\begin{equation}
c: \cms_{0,r+2}\to \overline{\sM}_0(1,1,\varepsilon,\ldots, \varepsilon).
\end{equation}
Since the point $0$ has weight 1,  we have that $c^\ast(\widehat{\psi}_0)=\psi_1$. Therefore the top intersection of the $\psi$ class on the weighted curves space is equal to the top intersection of a  $\psi$ class on an (ordinary) $\cms_{0,n}$, which is $1[pt.]$. 
The $1/r!$ factor comes from the fact that the branch space we consider is a $S_r$ quotient of $\overline{\sM}_0(1,1,\varepsilon,\ldots, \varepsilon)$.

\end{proof}

The following Lemma is just an adaptation to our notation and context of \cite[Lemma~1.17]{ion:tc}.

\begin{lemma}\label{lem:ion}
Let $\widetilde\psi_i \in \sR^1( \relrub{g}{\bx})$ correspond to a ramification point of order $x_i$ mapping to $0$. Then: \begin{equation}br^\ast(\widehat{\psi}_0)= x_i\widetilde{\psi_i}\end{equation} 
\end{lemma}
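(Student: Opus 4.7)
The plan is to realize both classes as first Chern classes of line bundles on $\relrub{g}{\bx}$ built from the universal relative map, and to produce an explicit isomorphism between them where the factor $x_i$ arises directly from the order of ramification. More precisely, let $\pi_{\mathcal{C}}:\mathcal{C}\to\relrub{g}{\bx}$ be the universal source curve with section $\sigma_i$ corresponding to the $i$-th marked point, let $\pi_{\mathcal{T}}:\mathcal{T}\to\relrub{g}{\bx}$ be the universal rubber target with section $\sigma_0$ corresponding to the relative divisor $0$, and let $f:\mathcal{C}\to\mathcal{T}$ be the universal map, so that $f\circ\sigma_i=\sigma_0$. By definition $\widetilde{\psi}_i=c_1(\sigma_i^*\omega_{\pi_{\mathcal{C}}})$. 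Since $\sigma_0$ lies on an external component of the target chain with a fully weighted marking, that component is never contracted by the target-stabilization morphism $\mathcal{T}\to\br^*\mathcal{T}_{\cms_{\br}}$, and the $\C^*$-rubber action fixes $\sigma_0$; hence one also has $\br^*\widehat{\psi}_0=c_1(\sigma_0^*\omega_{\pi_{\mathcal{T}}})$.

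Next I would build an isomorphism of line bundles on $\relrub{g}{\bx}$
\[
\sigma_0^*\omega_{\pi_{\mathcal{T}}}\;\xrightarrow{\;\sim\;}\;\bigl(\sigma_i^*\omega_{\pi_{\mathcal{C}}}\bigr)^{\otimes x_i}
\]
directly from $f$. The key input is that the ramification profile at the relative markings is built into the moduli problem: in a formal neighborhood of $\sigma_i$, after choosing local parameters $t$ along $\sigma_0$ and $u$ along $\sigma_i$, the universal map takes the form $f^{*}t=a\,u^{x_i}$ where $a$ is a nowhere-vanishing function on the base. Consequently $f^{*}$ sends the ideal sheaf $\mathcal{I}_{\sigma_0}$ into $\mathcal{I}_{\sigma_i}^{x_i}$, and the induced map on conormal bundles
\[
\mathcal{I}_{\sigma_0}/\mathcal{I}_{\sigma_0}^{2}\;\longrightarrow\;\mathcal{I}_{\sigma_i}^{x_i}/\mathcal{I}_{\sigma_i}^{x_i+1}\;=\;\bigl(\mathcal{I}_{\sigma_i}/\mathcal{I}_{\sigma_i}^{2}\bigr)^{\otimes x_i}
\]
is an isomorphism of line bundles precisely because $a$ is a unit. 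Identifying these conormal bundles with the cotangent lines of the fibers (valid since both sections pass through smooth points of their respective fibers) gives the displayed isomorphism.

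Taking first Chern classes yields $\br^{*}\widehat{\psi}_0=x_i\,\widetilde{\psi}_i$. The main technical obstacle is simply the bookkeeping in the first paragraph: one must check that neither the rubber $\C^*$-quotient on $\mathcal{T}$ nor the target-stabilization that defines $\br$ alters the cotangent line at $\sigma_0$. Once that is in place, the substance of the lemma is the local computation $f^{*}t=a\,u^{x_i}$, and the factor $x_i$ appears, as expected, purely from passing from the ideal $\mathcal{I}_{\sigma_0}/\mathcal{I}_{\sigma_0}^2$ to the $x_i$-th symbolic power of $\mathcal{I}_{\sigma_i}/\mathcal{I}_{\sigma_i}^2$.
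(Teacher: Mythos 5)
Your argument is correct and is essentially the paper's proof in a different dialect: the paper phrases the computation as the divisor identity $f^\ast(0)=x_i s_i$ near the $i$-th section combined with $\psi=-s^\ast(s)$, whereas you phrase the identical local input $f^\ast\mathcal{I}_{\sigma_0}=\mathcal{I}_{\sigma_i}^{x_i}$ as an isomorphism of conormal (equivalently cotangent) lines. Both versions rest on the same two observations — the weight-one point $0$ lies on a component never contracted by the target stabilization, so $\br^\ast\widehat{\psi}_0$ is computed on the universal expanded target, and the ramification order $x_i$ turns the cotangent line at $0$ into the $x_i$-th tensor power of the one at the $i$-th marking.
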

\begin{proof} Consider the commutative diagram
\begin{equation}
\xymatrix{
\relrubU{g}{\bx}\ar[dr] \ar[d]^{f}&\\
\overline{\mathcal{U}}_{br} \ar[dr]  & \relrub{g}{\bx} \ar[d]^{br} \ar@/_/[ul]_{s_i} \\
  & \cms_{br} \ar@/^/[ul]^{s_0}
}
\end{equation}
where $\relrubU{g}{x}$ and $\overline{\sU}_{br}$ are the respective universal curves, and the maps $s_i$ and $s_0$ are the sections for the respective marked points.  We then have the following chain of equalities:
\begin{equation}
br^\ast(\widehat{\psi}_0)= -br^\ast s_0^\ast(0)= -s_i^\ast f^\ast(0)= -s_i^\ast(x_is_i)= x_i\widetilde{\psi_i}.
\end{equation}
\end{proof}

\begin{lemma}\label{lem:cor}
Let $\widetilde\psi_i \in \sR^1( \relrub{g}{\bx})$ correspond to a ramification point of order $x_i$, and $\psi_i\in \sR^1( \cms_{g,n})$  be the psi class for the same mark after forgetting the map.  Denote by $\sD_i$  the divisor parameterizing maps where the $i$-th mark is supported on a trivial component. Then
\begin{equation}\widetilde{\psi_i} = stab^\ast\psi_i+ \frac{1}{x_i}\sD_i.\end{equation}
\end{lemma}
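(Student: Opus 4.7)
My plan is to compare $\widetilde{\psi_i}$ and $\stab^\ast \psi_i$ by a direct calculation on the universal source curves of both moduli spaces, and then localize the difference to $\sD_i$.

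First I set up the universal picture: let $\sU := \relrubU{g}{\bx} \to \relrub{g}{\bx}$ be the universal source curve with $i$-th marking section $s_i$, and let $\mathcal{V} := \stab^\ast \sC_{g,n} \to \relrub{g}{\bx}$ be the pullback of the universal stable curve with $i$-th marking section $\sigma_i$. By the usual description of $\psi$ classes via sections, $\widetilde{\psi_i} = s_i^\ast \omega_{\sU/\relrub{g}{\bx}}$ and $\stab^\ast \psi_i = \sigma_i^\ast \omega_{\mathcal{V}/\relrub{g}{\bx}}$. The stabilization morphism lifts fiberwise to a contraction $\phi:\sU \to \mathcal{V}$ collapsing every trivial source component to a point, with $\phi \circ s_i = \sigma_i$.

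Next, because $\phi$ is an isomorphism away from the universal trivial components, the standard comparison of relative dualizing sheaves under $\phi$ takes the form $\omega_{\sU/\relrub{g}{\bx}} \cong \phi^\ast \omega_{\mathcal{V}/\relrub{g}{\bx}} \otimes \sO_{\sU}(E)$, where $E$ is a divisor supported on the exceptional locus of $\phi$. Pulling back via $s_i$ and using $\phi \circ s_i = \sigma_i$ yields
\begin{equation*}
\widetilde{\psi_i} - \stab^\ast \psi_i = s_i^\ast E,
\end{equation*}
a divisor class on $\relrub{g}{\bx}$ supported on $\sD_i$ (since $s_i$ meets the exceptional locus only over $\sD_i$). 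Thus $\widetilde{\psi_i} - \stab^\ast \psi_i = c\cdot \sD_i$ for some rational constant $c$ to be pinned down.

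Finally, to identify $c = \tfrac{1}{x_i}$ I would perform a local analysis at a generic point of $\sD_i$. There the trivial component $C' \cong \PP^1$ containing $p_i$ is attached to the rest of the source at a single node $q$, and the restriction of the map to $C'$ is the unique degree-$x_i$ cover of a component of $T$ totally ramified at both $p_i$ and $q$. Li's predeformability condition at $q$ forces the local source-smoothing parameter $s$ and the corresponding target-smoothing parameter $t$ to be related by $t = s^{x_i}$. The natural coordinate on $\relrub{g}{\bx}$ transverse to $\sD_i$ is the target parameter $t$, while $\sU$ carries the source coordinate, so the exponent $x_i$ between these two produces a multiplicity of $\tfrac{1}{x_i}$ in the pullback $s_i^\ast E$.

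The main technical obstacle is this last step: carefully extracting the rational coefficient $\tfrac{1}{x_i}$ from the local analytic model of $\phi$ using Li's predeformability relation, or equivalently tracking the $\mathbb{Z}/x_i$-automorphism of the fully ramified Galois cover $C'\to f(C')$. The overall strategy, however, is classical, and closely parallel computations appear in \cite{ion:tc} and \cite{bssz:psi}, which gives confidence that the local model produces the asserted multiplicity.
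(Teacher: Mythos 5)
Your argument is essentially the paper's own proof in equivalent language: where you write $\widetilde\psi_i=s_i^\ast\omega_{\mathcal U/\relrub{g}{\bx}}$ and correct the relative dualizing sheaf by the exceptional divisor $E$, the paper writes $\widetilde\psi_i=-\tilde s_i^\ast(\tilde s_i)$ and corrects $\mathrm{Stab}^\ast(s_i)$ by the bubble locus $E_i$ --- the same comparison on the universal curves --- and both proofs then extract the coefficient $\tfrac{1}{x_i}$ from the order-$x_i$ structure at the node where the trivial bubble attaches (your predeformability relation $t=s^{x_i}$ and the $\mathbb{Z}/x_i$ of deck transformations are exactly the source of the $\mathbb{Z}_{x_i}$-gerbe the paper invokes). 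The only point to watch is that $\omega_{\mathcal U}\cong\phi^\ast\omega_{\mathcal V}\otimes\mathcal O(E)$ with every exceptional component appearing with coefficient $1$ fails on deeper strata where chains of trivial components occur, but since only the generic points of the components of $\sD_i$ matter for identifying a divisor class this does not affect the conclusion.
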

\begin{proof} Consider the commutative diagram
\begin{equation}
\xymatrix{
\relrubU{g}{\bx}\ar[r]^{Stab} \ar[d]^{}&\overline{\mathcal{U}}_{g,n}\ar[d]  \\
 \relrub{g}{\bx}\ar@/_/[u]_{\tilde{s_i}}  \ar[r]^{stab}  & \cms_{g,n} \ar@/_/[u]_{s_i}
 }
\end{equation}
where $\relrubU{g}{x}$ and $\overline{\sU}_{g,n}$ are the respective universal curves, and the maps $\tilde{s_i}$ and $s_i$ are the sections for the respective marked points.  Denote by $E_i\subset \relrubU{g}{x}$ the locus of the contracted rational bubbles supporting the $i$-th mark. We observe that $E_i$ intersects the image of the section $\tilde{s_i}$ on the locus $Z_i$ parameterizing the $x_i$-twisted nodes where the trivial components attach to the rest of the curve; $Z_i$ is a $\mathbb{Z}_{x_i}$ gerbe over $\sD_i$. Abusing notation and denoting by $s_i$ (resp. $\tilde{s_i}$) both a section and its image:
\begin{equation}
\tilde{\psi_i}= -\tilde{s_i}^\ast(\tilde{s_i})= -\tilde{s_i}^\ast(Stab^\ast(s_i)-E_i )= stab^\ast (\psi_i) + \frac{1}{x_i}\sD_i.
\end{equation}

\end{proof}

\begin{figure}[tb]\begin{center}
		\includegraphics[width=0.6\textwidth]{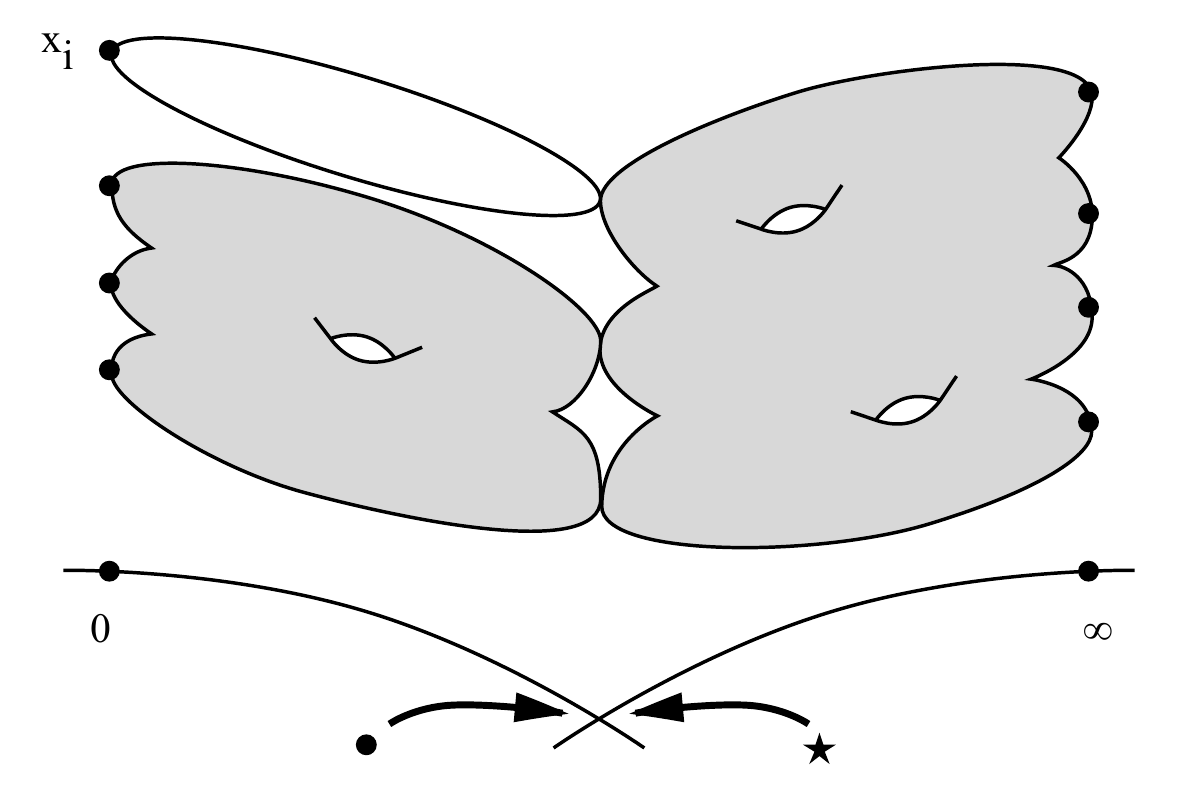}
\end{center}
\caption{The map parameterized by a generic point of an irreducible boundary divisor $D_i\subset \relrub{g}{\bx}$, and the corresponding expanded target.  The  corresponding divisor $T_i$ in the Losev-Manin space is naturally a  product  of two other Losev-Manin spaces via the morphism gluing the weight $1$ points $\bullet$ and $\star$; via pullback from the natural projections we define the classes $\widehat{\psi}_\bullet, \widehat{\psi}_\star$, supported on $T_i$, and corresponding to the psi classes ``at the shadows of the node".  }
\label{fig:psistar}
\end{figure}

\begin{lemma}\label{lem:side}
With all notation as in  Lemma \ref{lem:cor}, let $D_i$ be an irreducible component of $\sD_i$ and $\widehat{\psi}_\bullet, \widehat{\psi}_\star$ the classes introduced and explained in Figure~\ref{fig:psistar}. Then:
\beq
\label{eq:side}
\tilde{\psi_i}_{|D}= -\frac{1}{x_i}br^\ast(\widehat{\psi}_\bullet)
\eeq
\begin{proof}

 Consider the restriction of the exceptional locus ${E_i}_{|\pi^\ast(D)}\to D$. It is a projective bundle with fibers $\PP^1$, and we can identify the locus of nodes $Z_i$ with the $0$ section, and the image of $\tilde{s_i}$ with the infinity section of this bundle. Denoting $N_{0}$ and $N_{\infty}$ the respective normal bundles, it is a fact that $c_1(N_0)=-c_1(N_{\infty})$.
Then formula \eqref{eq:side} follows immediately from Lemma \ref{lem:ion}.

\end{proof}
\end{lemma}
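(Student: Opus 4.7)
The plan is to realize $\widetilde{\psi}_i|_D$ geometrically inside the universal curve, exploiting the fact that on $D$ the $i$-th mark sits on a contracted rational bubble, and then to translate the resulting normal-bundle computation back to the Losev--Manin space via Lemma~\ref{lem:ion}. First I would invoke the standard identity $\widetilde{\psi}_i = -\tilde{s_i}^\ast(\tilde{s_i})$ for the section of the universal curve $\relrubU{g}{\bx}$ corresponding to the $i$-th mark. Over $D$, the image of $\tilde{s_i}$ is contained in the exceptional locus $E_i$ of the stabilization $\relrubU{g}{\bx}\to\overline{\sU}_{g,n}$; restricting to the preimage of $D$, the piece $E_i|_{\pi^\ast(D)}\to D$ is a $\PP^1$-bundle whose fibers are the rational bubbles carrying the $i$-th mark. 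In this bundle, the image of $\tilde{s_i}$ is naturally the ``infinity'' section (where the mark lives) while the locus $Z_i$ of $x_i$-twisted nodes is the ``zero'' section (where the bubble attaches to the main curve), and these two sections are disjoint.

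The next step is the standard fact that in a $\PP^1$-bundle with two disjoint sections $s_0$ and $s_\infty$, the first Chern classes of the respective normal bundles satisfy $c_1(N_0) = -c_1(N_\infty)$. Applied here this gives
\[
\widetilde{\psi}_i|_D \;=\; -\tilde{s_i}^\ast(\tilde{s_i})|_D \;=\; -c_1(N_\infty) \;=\; c_1(N_0),
\]
reducing the problem to a computation of the Chern class of the normal bundle of $Z_i$ inside the bubble.

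Finally, I would interpret $c_1(N_0)$ via the branch morphism. Since the generic bubble is a totally ramified degree-$x_i$ cover of the leftmost component of the expanded target, the node maps to the shadow $\bullet$ of the corresponding node in the target. The local model $w \mapsto w^{x_i}$ at the ramification point, which is exactly the calculation carried out in the proof of Lemma~\ref{lem:ion}, contributes a factor of $x_i$ when comparing the cotangent line on the source bubble to the cotangent line on the target. This yields $c_1(N_0) = -\tfrac{1}{x_i}\, br^\ast(\widehat{\psi}_\bullet)$, and substituting gives formula~\eqref{eq:side}.

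The main obstacle is checking carefully that the $\mathbb{Z}_{x_i}$-gerbe structure on $Z_i$ (it is only a gerbe over $\sD_i$, not an isomorphism) does not introduce extra factors when invoking both the $\PP^1$-bundle identity and the subsequent comparison with $br^\ast(\widehat{\psi}_\bullet)$; careful bookkeeping of this twisting is what ensures the precise constant $1/x_i$ in the final answer. Once the gerbe accounting is settled, each remaining step is a direct chase of pullbacks.
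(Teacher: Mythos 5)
Your proposal is correct and follows essentially the same route as the paper's proof: restrict the exceptional locus to a $\PP^1$-bundle over $D$, identify $Z_i$ and the image of $\tilde{s_i}$ with the zero and infinity sections, use $c_1(N_0)=-c_1(N_\infty)$, and then evaluate $c_1(N_0)$ via the totally ramified local model exactly as in Lemma~\ref{lem:ion}. Your added attention to the $\mathbb{Z}_{x_i}$-gerbe bookkeeping is a reasonable refinement of a point the paper leaves implicit.
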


\section{Geometric wall crossing} \label{sec:WC}
Fix a genus $g$ and ramification data $\bx$ in a chamber of polynomiality $\mathfrak{c}$. Our first result is a formula computing the double Hurwitz number (i.e. $0$-dimensional cycle)  $H_g(\bx)$ as an intersection number on the moduli space of curves.

\begin{prop}\label{prop:formula} The following equality of zero dimensional cycles in $\cms_{g,n}$ holds:
\begin{equation}\label{eq:formula}H_g(\bx)[\pt]=r! \sum_{k=0}^{2g-3+n}\binom{2g-3+n}{k}x_i^{2g-3+n-k}\psi_i^{2g-3+n-k} \stab_\ast\left( \left(\sD_i^{\fc}\right)^{k}\cap\left[\relrub{g}{\bx}\right]^{\rm vir}\right).\end{equation}
\end{prop}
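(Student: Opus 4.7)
The plan is to combine equation \eqref{eq:dhnif} with the three comparison lemmas of Section \ref{sec:psi} and the projection formula. Starting from
\begin{equation*}
H_g(\bx)[\pt] = \stab_\ast\bigl(\br^\ast([\pt])\cap [\relrub{g}{\bx}]^{\rm vir}\bigr),
\end{equation*}
I would rewrite $[\pt]\in \ch^\ast(\cms_{\br})$ as a top power of $\widehat{\psi}_0$, track that power through $\br^\ast$ to land on $\tilde\psi_i$, decompose $\tilde\psi_i$ as a pullback plus a boundary correction, and finally use the projection formula to move the $\stab$-pullback classes outside the pushforward.

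Concretely, fix an index $i$ with $x_i>0$ (the case $x_i<0$ is symmetric via $\widehat{\psi}_\infty$). Lemma \ref{lem:ev} gives $[\pt] = r!\,\widehat{\psi}_0^{\,2g-3+n}$, and since $\br^\ast$ is a ring map on Chow, Lemma \ref{lem:ion} yields
\begin{equation*}
\br^\ast([\pt]) \;=\; r!\,(\br^\ast\widehat{\psi}_0)^{2g-3+n} \;=\; r!\,(x_i\tilde\psi_i)^{2g-3+n}.
\end{equation*}
Lemma \ref{lem:cor} allows us to write $x_i\tilde\psi_i = x_i\,\stab^\ast\psi_i + \sD_i^{\fc}$, where the superscript only records that $\bx$ has been fixed in the chamber $\fc$. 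Expanding via the binomial theorem (the two summands need not commute as operators on the virtual class, but both are pulled back from, respectively, $\cms_{g,n}$ and the Chow ring of $\relrub{g}{\bx}$, so the usual binomial expansion is valid after capping with $[\relrub{g}{\bx}]^{\rm vir}$) gives
\begin{equation*}
(x_i\tilde\psi_i)^{2g-3+n} \;=\; \sum_{k=0}^{2g-3+n}\binom{2g-3+n}{k}\,x_i^{\,2g-3+n-k}\,(\stab^\ast\psi_i)^{2g-3+n-k}\,(\sD_i^{\fc})^{k}.
\end{equation*}
Applying $\stab_\ast(-\cap[\relrub{g}{\bx}]^{\rm vir})$ term by term, the projection formula pulls each power $(\stab^\ast\psi_i)^{2g-3+n-k}$ out of the pushforward as $\psi_i^{2g-3+n-k}$, producing exactly \eqref{eq:formula}.

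None of these manipulations present a serious obstacle: each lemma is in place and the stabilization morphism is proper, so projection formula applies. The only small points to verify are (i) that $\br^\ast$ commutes with cup-product powers, which is immediate since pullback on operational Chow is a ring map, and (ii) that the virtual class is respected when permuting the cap with pullback/pushforward, which is the standard compatibility of the virtual class with proper morphisms. Finally, one may observe as a sanity check that the answer is independent of the choice of index $i$, consistent with the fact that $H_g(\bx)$ is a single number.
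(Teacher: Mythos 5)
Your proposal is correct and follows essentially the same route as the paper: apply Lemma \ref{lem:ev} to write $[\pt]$ as $r!\,\widehat{\psi}_0^{\,2g-3+n}$, pull back via $\br$ using Lemma \ref{lem:ion}, substitute the decomposition of Lemma \ref{lem:cor}, then expand binomially and push forward with the projection formula. The paper's proof is exactly this chain of equalities, so there is nothing to add.
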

\begin{proof}
 Recall diagram  \eqref{diag:central} and  formula \eqref{eq:dhnif}. 
An application of Lemmas~\ref{lem:ev}, \ref{lem:ion} and \ref{lem:cor}   provides the chain of equalities:
\begin{align*}
\br^\ast[\pt]\cap\left[\relrub{g}{\bx}\right]^{\rm vir} \stackrel{\ref{lem:ev}}{=}& r!\br^\ast\left(\widehat{\psi}^{2g-3+n}\right)\cap\left[\relrub{g}{\bx}\right]^{\rm vir}\\
\stackrel{\ref{lem:ion}}{=}& r!\left(x_i\widetilde{\psi}_i\right)^{2g-3+n}\cap\left[\relrub{g}{\bx}\right]^{\rm vir}\\
\stackrel{\ref{lem:cor}}{=}& r!\left(x_i\stab^\ast\psi_i+\sD_i^{\fc}\right)^{2g-3+n}\cap\left[\relrub{g}{\bx}\right]^{\rm vir}
\end{align*}
in the chow ring of $\relrub{g}{\bx}$ for any $i$-th marked pre-image of $0$.  Pushing forward via the stabilization morphism, expanding, and applying the projection formula yields:
\begin{align}
H_g(\bx)[\pt] =& r! \stab_\ast\left(\left(x_i\stab^\ast\psi_i+\sD_i^{\fc}\right)^{2g-3+n}\cap\left[\relrub{g}{\bx}\right]^{\rm vir}\right) \nonumber\\
=& r! \stab_\ast\left( \sum_{k=0}^{2g-3+n}\binom{2g-3+n}{k}(x_i\stab^\ast\psi_i)^{2g-3+n-k}\left(\sD_i^{\fc}\right)^{k}\cap\left[\relrub{g}{\bx}\right]^{\rm vir}\right)\nonumber\\
=& r! \sum_{k=0}^{2g-3+n}\binom{2g-3+n}{k}(x_i\psi_i)^{2g-3+n-k}\stab_\ast\left( \left(\sD_i^{\fc}\right)^{k}\cap\left[\relrub{g}{\bx}\right]^{\rm vir}\right). \label{nice}
\end{align}
\end{proof}

We regard the following corollary as evidence for polynomiality of the double Hurwitz class. 

\begin{cor}\label{cor:poly}
Assuming $\HH{g}{\bx}$ is a Chow valued  (possibly piecewise) polynomial of degree $2g$, $H_g(\bx)$ is a piecewise polynomial function of degree $4g-3+n$.
\end{cor}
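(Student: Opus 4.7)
The plan is to analyze Proposition~\ref{prop:formula} summand by summand, since the chamber structure is already built into the divisor $\sD_i^{\fc}$; piecewise polynomiality of $H_g(\bx)$ will follow once each summand of \eqref{eq:formula}, restricted to the open chamber $\fc$, is shown to be polynomial in $\bx$ of degree at most $4g-3+n$. The $k=0$ summand is immediate: it equals $r!\,x_i^{2g-3+n}$ times $\int_{\cms_{g,n}}\psi_i^{2g-3+n}\cap\HH{g}{\bx}$, which under the standing polynomiality hypothesis on $\HH{g}{\bx}$ is polynomial of degree $(2g-3+n)+2g=4g-3+n$ in $\bx$.

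For $k\geq 1$ I would decompose $(\sD_i^{\fc})^k\cap[\relrub{g}{\bx}]^{vir}$ as a sum over the codimension-$k$ boundary strata $\tilde\Delta\subset\relrub{g}{\bx}$ supported in $\sD_i^{\fc}$. Lemma~\ref{lem:bound} discards those whose pushforward under $\stab$ vanishes, retaining only those with $\delta=l+k$, where $l$ is the number of loops and $\delta$ the number of non-trivial nodes in the dual graph. Regrouping the surviving strata by their image boundary stratum $\Delta\subset\cms_{g,n}$, the collection of $\tilde\Delta$ mapping to a given $\Delta$ of first Betti number $l$ is parametrized by the lattice points of an $l$-dimensional rational polytope $P_\Delta$ whose facets are cut out by homogeneous linear forms in $\bx$. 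On each such stratum the fiber product structure \eqref{push} together with the $\mu_{m_e}$-gerbe contributions at the twisted nodes factors $\stab_\ast[\tilde\Delta^{vir}]$ as a product $\prod_v \HH{g_v}{\bx_v}$ of vertex double Hurwitz classes, weighted by one linear factor $m_e$ per non-trivial node.

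The degree bookkeeping then runs as follows. By the standing assumption each $\HH{g_v}{\bx_v}$ has degree $2g_v$, and $\sum_v g_v=g-l$, so the vertex-product contributes degree $2(g-l)$; the $\delta=l+k$ node-multiplicity linear factors add degree $l+k$ in the combined variables $(\bx,\text{polytope coordinates})$; and Ehrhart-type polynomiality for sums of a polynomial over lattice points of a rational polytope whose scale grows linearly in $\bx$ produces an extra $l$ degrees. The total is $2(g-l)+(l+k)+l=2g+k$, so pairing with the factor $x_i^{2g-3+n-k}\psi_i^{2g-3+n-k}$ yields the advertised total degree $4g-3+n$ on the chamber $\fc$. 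The piecewise behavior is built in since both the set of contributing $\tilde\Delta$ and the polytopes $P_\Delta$ depend on $\fc$. The main technical obstacle is the factorization $\stab_\ast[\tilde\Delta^{vir}]=\prod_e m_e\cdot\prod_v \HH{g_v}{\bx_v}$: carefully tracking the virtual class on the fiber product \eqref{push} and the gerbe contributions at the twisted nodes is the genuine content, while the degree count itself and the Ehrhart-type summation are routine.
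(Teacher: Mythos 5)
Your proposal is correct and follows essentially the same route as the paper's proof: expand formula \eqref{eq:formula} term by term, decompose $(\sD_i^{\fc})^k$ into boundary strata, use Lemma~\ref{lem:bound} to retain only those with $\delta=l+k$, parametrize the strata over a given $\Delta$ by lattice points of an $l$-dimensional polytope with linear faces, and perform the identical degree count $2(g-l)+(l+k)+l=2g+k$ before pairing with $x_i^{2g-3+n-k}\psi_i^{2g-3+n-k}$. Your explicit flagging of the factorization $\stab_\ast[\tilde\Delta^{vir}]=\prod_e m_e\cdot\prod_v \HH{g_v}{\bx_v}$ as the genuine technical content is a fair and slightly more candid account of what the paper leaves implicit, but it is not a different argument.
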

\begin{proof}
We show that for every $k$,  $\stab_\ast\left( \left(\sD_i^{\fc}\right)^{k}\cap\left[\relrub{g}{\bx}\right]^{\rm vir}\right)$ in chamber $\fc$ is a polynomial class of degree $2g+k$.  Consider an irreducible boundary stratum $\Delta_{g,k}$ of codimension $k$ in $\cms_{g,n}$. With the same notation of Section \ref{sec:vanishing}, $\delta$ denotes the number of nodes and $l$ the combinatorial genus of the dual graph. $\Delta_{g,k}$ is the (possibly empty) image of a collection of irreducible boundary strata $\tilde{\Delta}_{g,k, m_1, \ldots, m_l}$, indexed by an $l-tuple$ of integers parameterized by the lattice points of a polytope whose faces are given by linear functions of the $x_i$'s.
Each irreducible component $\tilde{\Delta}_{g,k, m_1, \ldots, m_l}$ pushes forward to a class which is a polynomial of degree $2\sum g_i +\delta$ in the $x$'s and in the $m$'s, considered as variables. Adding all such contributions over the lattice points of the constraining polytope shows that the coefficient of $\Delta_{g,k}$ is a polynomial of degree $2\sum g_i +\delta+l$ in the $x$ variables. Now we invoke the relation $ \delta=l+k $ from Lemma \ref{lem:bound},  and recall that $g=\sum g_i +l$, to conclude the proof.
\end{proof}

As a consequence of formula \eqref{nice} we obtain an intersection theoretic formula for the wall crossings. We briefly recall (see \cite{ssv:gz,cjm:wc} for a more thorough definition) that by wall crossing we mean the difference of the double Hurwitz polynomials corresponding to two adjacent chambers.

\begin{thm}\label{thm:wc}
 Consider two adjacent chambers $\fc_1$ and $\fc_2$, separated by a wall \[W_{I}= \left\{\sum_{i\in I} x_i=0\right\}.\] Assume $\bx\in \fc_2 (\mbox{where} \sum_{i\in I}x_i>0)$ and denote by $\sD_i^{W_I}= \sD_i^{\fc_2}-\sD_i^{\fc_1} \footnote{$\sD_i^{\fc_1}$ is a divisor defined in moduli spaces corresponding to $\bx\in \fc_1$. We define it for $\bx\in \fc_2$ by considering only the irreducible components that exist when evaluated at $\bx$. Specifically this means that all ghost automorphism weights of non-trivial nodes remain positive.} \in \relrub{g}{\bx}$, the wall crossing divisor. Then the wall crossing formula $WC_I(\bx)$ is a polynomial whose coefficients are classes supported on the pushforward of $\sD_i^{W_I}$.
 \end{thm}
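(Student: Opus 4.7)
The plan is to derive the wall crossing as a direct consequence of the master formula (\ref{eq:formula}) from Proposition \ref{prop:formula}. Applying (\ref{eq:formula}) in each of the chambers $\fc_1$ and $\fc_2$ and subtracting, one observes that the prefactors $r!$, $\binom{2g-3+n}{k}$, the monomial $(x_i\psi_i)^{2g-3+n-k}$, and the virtual class $[\relrub{g}{\bx}]^{vir}$ depend only on $\bx$ and $g$ (once the moduli-space convention in the footnote is adopted). The only genuinely chamber-dependent piece is the power $(\sD_i^{\fc})^k$, and the $k=0$ term cancels, so the wall crossing reduces to
\begin{equation*}
WC_I(\bx)[\pt] = r! \sum_{k=1}^{2g-3+n}\binom{2g-3+n}{k}(x_i\psi_i)^{2g-3+n-k}\, \stab_\ast\Bigl(\bigl((\sD_i^{\fc_2})^k - (\sD_i^{\fc_1})^k\bigr) \cap [\relrub{g}{\bx}]^{vir}\Bigr).
\end{equation*}

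The key step is then the elementary identity $a^k - b^k = (a-b)\sum_{j=0}^{k-1}a^{k-1-j}b^{j}$, applied in the Chow ring of $\relrub{g}{\bx}$ to the divisor classes $a=\sD_i^{\fc_2}$ and $b=\sD_i^{\fc_1}$. Since by definition $a-b = \sD_i^{W_I}$, I would rewrite
\begin{equation*}
(\sD_i^{\fc_2})^k - (\sD_i^{\fc_1})^k = \sD_i^{W_I}\cdot \sum_{j=0}^{k-1} (\sD_i^{\fc_2})^{k-1-j}(\sD_i^{\fc_1})^{j}.
\end{equation*}
Substituting this back and invoking the projection formula, every nonzero summand in $WC_I(\bx)$ is the pushforward via $\stab$ of a class that contains $\sD_i^{W_I}$ as a factor, hence is supported on $\stab_\ast \sD_i^{W_I}$. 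The polynomiality of the coefficients in $\bx$ follows from exactly the same argument as in Corollary \ref{cor:poly}: each pushforward $\stab_\ast\bigl(\sD_i^{W_I}\cdot (\text{product of boundary divisors}) \cap [\relrub{g}{\bx}]^{vir}\bigr)$ is assembled from boundary strata indexed by lattice points of a polytope whose facets are linear in $\bx$, and Lemma \ref{lem:bound} pins down precisely which strata survive under pushforward.

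The principal obstacle I foresee is the bookkeeping around the chamber-dependence of the moduli space $\relrub{g}{\bx}$ itself. As $\bx$ crosses the wall $W_I$, the scheme $\relrub{g}{\bx}$ changes discretely, and the irreducible components of $\sD_i^{\fc_1}$ and $\sD_i^{\fc_2}$ a priori live on different moduli problems. I would use the extension convention indicated in the footnote — retaining only those irreducible components of $\sD_i^{\fc_1}$ whose ghost automorphism weights at non-trivial nodes remain strictly positive when $\bx$ is pushed across the wall into $\fc_2$ — in order to interpret $\sD_i^{W_I} = \sD_i^{\fc_2}-\sD_i^{\fc_1}$ as an honest divisor class on a single ambient space. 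Once this compatibility is in place, the combinatorial content, namely that the surviving components of $\sD_i^{W_I}$ correspond exactly to those cut configurations where the subset $I$ separates the preimages of $0$ and $\infty$ in the prescribed way, is forced by the splitting behaviour on the wall and should match the inductive formulas of \cite{ssv:gz,cjm:wc}.
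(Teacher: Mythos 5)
Your proposal is correct and follows essentially the same route as the paper: subtract the two instances of formula \eqref{eq:formula} evaluated at the same $\bx$ (with $\sD_i^{\fc_1}$ interpreted via the footnote convention), observe that only the powers of $\sD_i^{\fc}$ differ, and extract a factor of $\sD_i^{W_I}$ from $(\sD_i^{\fc_2})^k-(\sD_i^{\fc_1})^k$. The paper exhibits that factor by binomially expanding $(\sD_i^{\fc_1}+\sD_i^{W_I})^k-(\sD_i^{\fc_1})^k$ rather than via the identity $a^k-b^k=(a-b)\sum_{j}a^{k-1-j}b^{j}$, but the two expansions are interchangeable and give the same support statement.
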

 
\begin{proof}
By applying formula \eqref{nice}, we have
\begin{align}\hspace{-1cm}
WC_I(\bx)=& r! \stab_\ast\left(\left(\left(x_i\stab^\ast\psi_i+\sD_i^{\fc_2}\right)^{2g-3+n}-\left( x_i\stab^\ast\psi_i+\sD_i^{\fc_1}\right)^{2g-3+n}\right)\cap\left[\relrub{g}{\bx}\right]^{\rm vir}\right) \nonumber\\
=& r! \stab_\ast\left( \sum_{k=0}^{2g-3+n}\binom{2g-3+n}{k}(x_i\stab^\ast\psi_i)^{2g-3+n-k}\left(\left(\sD_i^{\fc_2}\right)^{k}-\left(\sD_i^{\fc_1}\right)^{k}\right)\cap\left[\relrub{g}{\bx}\right]^{\rm vir}\right)\nonumber\\
=& r! \sum_{k=0}^{2g-3+n}\sum_{m=0}^{k-1}\binom{2g-3+n}{k}\binom{k}{m}(x_i\psi_i)^{2g-3+n-k}\stab_\ast\left( \left(\sD_i^{\fc_1}\right)^{m} \left(\sD_i^{W_I}\right)^{k-m}\cap\left[\relrub{g}{\bx}\right]^{\rm vir}\right). \label{for:wc} \nonumber\\
 & 
\end{align}
Since the second summation terminates at $k-1$, formula \eqref{for:wc} is  supported on $\stab_\ast(\sD_i^{W_I})$.
\end{proof}

\subsection*{An example in genus $0$.}  
In this example we exhibit a wall crossing in the polynomiality of genus $0$ five pointed double Hurwitz numbers using the above formulas.  Our path begins in the \emph{totally negative chamber} where $x_1$ and $x_2$ are positive; $x_3,x_4,$ and $x_5$ are negative; $x_1$ is very large; and $x_2$ is very small:
\begin{align*}
x_3,x_4,x_5<-\epsilon&\\
x_1>> 0&\\
\epsilon>x_2>0&.
\end{align*}
Denote this chamber by $\fc_1$.  We will cross the wall $W_{\left\{x_2,x_5\right\}}$ given by $x_2+x_5=0$, i.e. moving from $\fc_1$ where $x_2+x_5<0$ to the adjacent chamber $\fc_2$ where $x_2+x_5>0$, allowing $|x_2|$ to overtake $|x_5|$.  As described in \cite{ssv:gz}, the double Hurwitz polynomials take the form:
\begin{description}
	\item[$\fc_1$] $$
H_0(\bx)= 3! x_1^2
$$
	\item[$\fc_2$]$$
H_0(\bx)= 3!x_1(x_1+x_2+x_5) 
$$
\end{description}
with the wall crossing given by the polynomial contribution \[WC(\bx)_{\left\{x_2,x_5\right\}}=3(x_2+x_5) 2x_1.\]

\begin{figure}[tb]
\begin{center}
		\includegraphics[width=0.6\textwidth]{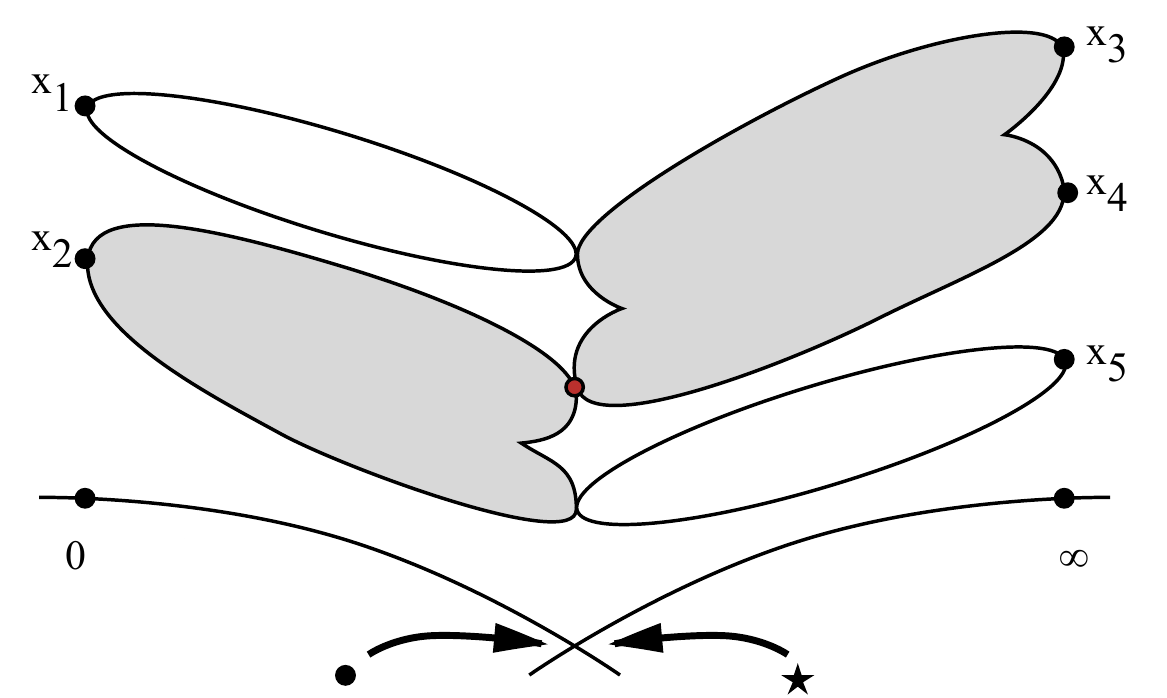}
\end{center}
\caption{A generic map in the chamber 2 divisor  $\sD_1^{\fc_2}$.}  
\label{fig:example}
\end{figure}

In the chamber $\fc_1$, the divisor $\sD_1^{\fc_1}$ is empty, so Formula~\eqref{eq:formula} reduces easily to $H_0(\bx)= 3! x_1^2$ since only the $k=0$ term in the sum contributes.  Crossing the wall $W_{\left\{x_2,x_5\right\}}$ to $\fc_2$, the divisor $\sD_1^{\fc_2}$ is non--empty and takes the form of Figure~\ref{fig:example}.  Notice that, after an inconsequential replacement of $I$ with $I^c$, $\sD_1^{\fc_1}$ is an example of the more general form of a genus $0$ wall crossing divisor depicted in Figure~\ref{fig:genus0} below.  This is because in $\fc_1$ the divisor is empty so the $\fc_2$ divisor alone controls the wall--crossing.  

Formula~\ref{eq:formula} determines our double Hurwitz polynomial in $\fc_2$ to be the intersection
\begin{equation}\label{eq:example}
H_0(\bx)= 3!(x_1^2 + 2x_1\psi_1\stab_\ast \sD_{\rm vir} +\stab_\ast(\sD_{\rm vir}^2))
\end{equation}
where $\sD_{\rm vir}:=\sD_1^{\fc_2}\cap\left[\relrub{0}{\bx}\right]^{\rm vir}$.  We are left with the task of computing the two intersections $\psi_1\stab_\ast (\sD_{\rm vir})$ and $\stab_\ast(\sD_{\rm vir}^2)$ on $\cms_{0.n}$.

For the first, note that $\stab_\ast(\sD_{\rm vir})= (x_2+x_5) D(1,3,4|2,5)$.  The polynomial factor is provided by the multiplicity at the non--trivial node.   Evaluating the psi class results in a contribution of $(x_2+x_5)[pt.]$.  

The second requires computing the self--intersection of $\sD_i^{\fc_2}$.  The normal bundle of $\sD_i^{\fc_2}$ contributes $-\br^\ast\widehat{\psi}_\bullet-\br^\ast\widehat{\psi}_\star$ to the excess intersection, but only $-\br^\ast\widehat{\psi}_\star$ remains since $\widehat{\psi}_\bullet$ is supported on a rational curve with three special points ($0$, the node, and the single branch point on this component of the expanded target).  As in the proof of Lemma~\ref{lem:bound}, our divisor maps to a fiber product of relative stable map spaces determined by the non--trivial components in Figure~\ref{fig:example}.  This is natural with respect to virtual classes.  The resulting intersection becomes 
\begin{align*}
\stab_\ast(-\br^\ast(\widehat{\psi}_\star)\sD_{\rm vir}){=}& \stab_\ast\left(-(x_2+x_5)\br^\ast(\widehat{\psi}_\star)\cap\left[\relrub{0}{x_1,x_2+x_5,x_3,x_4)}\right]^{\rm vir}\right)\\
\stackrel{\ref{lem:ion}}{=}& \stab_\ast\left(-(x_2+x_5)(x_1\widetilde{\psi}_1)\cap\left[\relrub{0}{x_1,x_2+x_5,x_3,x_4)}\right]^{\rm vir}\right)\\
\stackrel{\ref{lem:cor}}{=}& \stab_\ast\left(-x_1(x_2+x_5)\stab^\ast\psi_1\cap\left[\relrub{0}{x_1,x_2+x_5,x_3,x_4)}\right]^{\rm vir}\right)\\
=&-x_1(x_2+x_5)
\end{align*}

Thus in chamber $\fc_2$ we deduce that 
\begin{align*}
H_0(\bx)=&3!(x_1^2 + 2x_1\psi_1\stab_\ast \sD_{\rm vir} +\stab_\ast(\sD_{\rm vir}^2))\\
=&3!(x_1^2+2x_1(x_2+x_5) - x_1(x_2+x_5))\\
=&3!x_1(x_1+x_2+x_5)
\end{align*}
as expected.  Notice that computing the polynomial wall--crossing contribution of Theorem~\ref{thm:wc} in this way produces the expected result as well: 
\begin{align*}
WC_{\left\{x_2,x_5\right\}}(\bx)=&3!(2x_1\psi_1\stab_\ast \sD_{\rm vir} +\stab_\ast(\sD_{\rm vir}^2))\\
=&3!(2x_1(x_2+x_5) - x_1(x_2+x_5))\\
=&3!x_1(x_2+x_5).
\end{align*}
 
\begin{figure}[tb]
\begin{center}
		\includegraphics[width=0.6\textwidth]{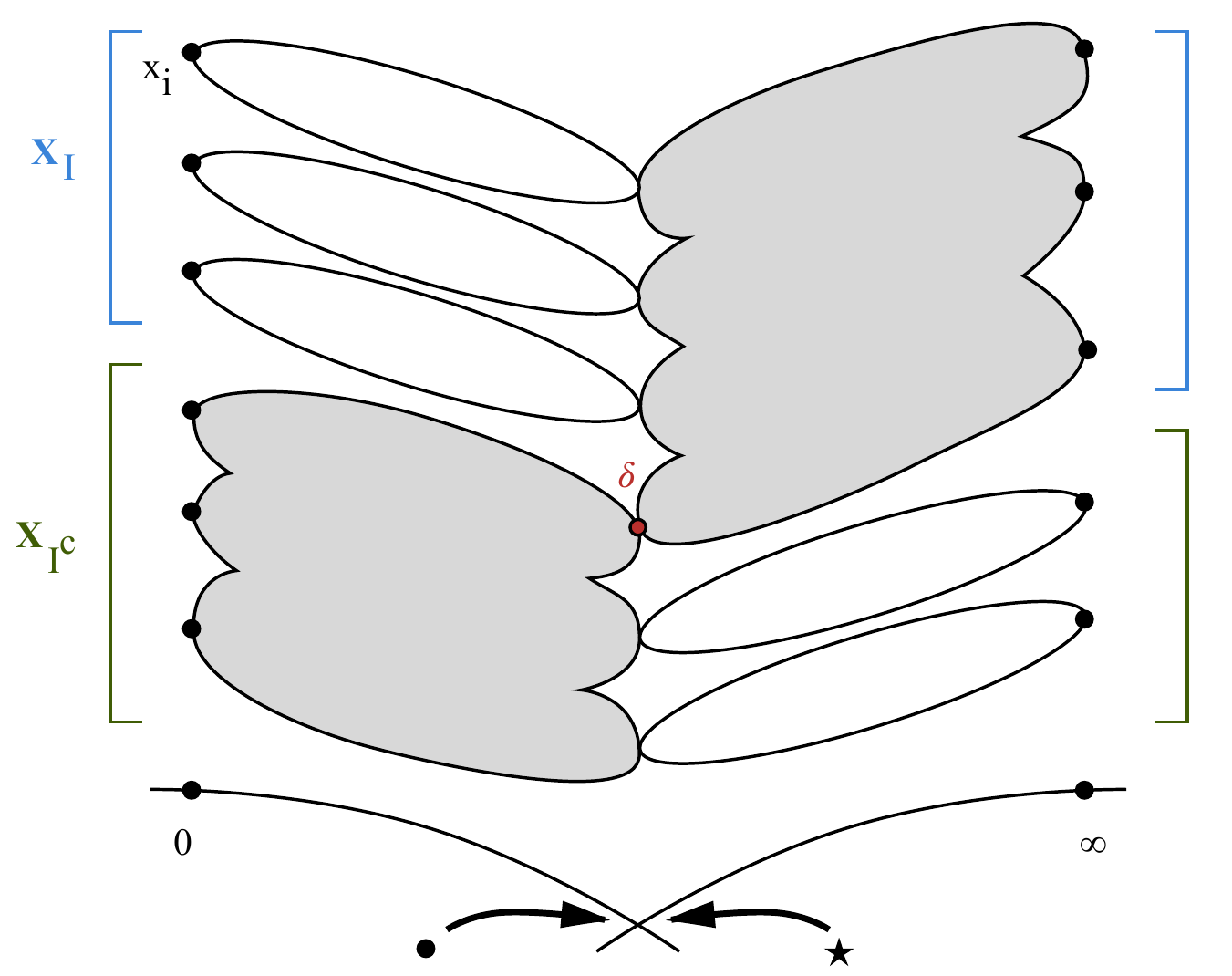}
\end{center}
\caption{The special irreducible component $D$ of the genus $0$ wall--crossing divisor $\sD_i^{W_I}$ for a proper subset $I$ of $\left\{1,\ldots,n\right\}$ containing $i$.  This component exists only when $\sum_{j\in I}x_j <0$. The two non--trivial components meet at the solitary non--trivial node with multiplicity $\delta=-\sum_{j\in I}x_j$.}  
\label{fig:genus0}
\end{figure}

\subsection{A genus zero curiosity}
We conclude our investigation with an intriguing observation in genus $0$. Here, every time one crosses a wall $W_I=\{\sum_{i\in I}x_i=0\}$, the wall crossing divisor $\sD^{W_I}_i$ contains exactly one irreducible component $D$ which does not pushforward to $0$ via $\stab$, depicted in Figure~\ref{fig:genus0}. The divisor $D$ has no transverse part  to its self intersection, and therefore $D^2= -D(\br^\ast(\widehat\psi_\bullet) +\br^\ast(\widehat\psi_\star))$. Together with Lemma \ref{lem:side}, this gives the impression that one may easily obtain a geometric proof of the combinatorial formula for the wall crossing (\cite{ssv:gz}), by using $D$ as the only variation of the  $\tilde{\psi}$ class across the wall. This approach is incorrect: pushforward doesn't commute with intersection, and hence the other irreducible  components of $\sD_i^{W_I}$ can and in fact do contribute to higher powers of $\sD_i^{W_I}$. However, if one ignores all such contributions, the final result is only incorrect up to sign. We have no conceptual explanation of this phenomenon, but we take it to mean that there are interesting and potentially useful vanishing statements hidden in the intersections that constitute the geometric wall crossing formula \eqref{for:wc}. We make this precise in the following proposition.
 
\begin{prop}\label{prop:gz}
Let $g=0$ and consider two adjacent chambers $\fc_1$ and $\fc_2$, separated by a wall $W_{I}= \{\sum_{i\in I} x_i=0\}$. Assume $\bx\in \fc_2 = \{\sum_{i\in I} x_i<0\}$ and let $D$  be the irreducible divisor depicted in Figure~\ref{fig:genus0}, with $r_1$ and $r_2$ being the number of simple ramification on each component of a cover in $D$. We define the ``naive" psi class in chamber $\fc_1$ as $\tilde{\tilde{\psi}}_i:=\tilde\psi_i-D$.  Then
\beq
\label{eq:nwc}
r! \stab_\ast((x_i\tilde\psi_i)^{r-1}-(x_i\tilde{\tilde{\psi}}_i)^{r-1})= (-1)^{r_1-1}WC_I(\bx).
\eeq
\end{prop}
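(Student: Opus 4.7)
The plan is to expand the difference on the left-hand side via the binomial theorem into a sum of classes each supported on $D$, and then to evaluate the resulting integrals using the product decomposition of $D$ introduced in the proof of Lemma~\ref{lem:bound}.

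First, I would write
\[
(x_i\tilde\psi_i)^{r-1}-(x_i\tilde{\tilde{\psi}}_i)^{r-1}=\sum_{k=1}^{r-1}\binom{r-1}{k}(-1)^{k-1}(x_i\tilde\psi_i)^{r-1-k}(x_iD)^k
\]
so that every summand for $k\ge 1$ is supported on $D$. Restricting to $D$, I would substitute $x_i\tilde\psi_i|_D=-\br^\ast\widehat\psi_\bullet$ from Lemma~\ref{lem:side} and the normal-bundle identification $D|_D=-\br^\ast(\widehat\psi_\bullet+\widehat\psi_\star)$ recalled just before the proposition, turning each term into a polynomial in $\br^\ast\widehat\psi_\bullet$ and $\br^\ast\widehat\psi_\star$ capped with $[D]^{\rm vir}$.

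Next, up to a $\mathbb{Z}_\delta$-gerbe coming from the unique non-trivial node, $D$ is isomorphic to the product $\relrub{0}{\bx_1}\times\relrub{0}{\bx_2}$ of the two smaller rubber spaces associated to the splitting of $\bx$ across the wall, and under this identification $\br^\ast\widehat\psi_\bullet$ and $\br^\ast\widehat\psi_\star$ descend to branch-pulled-back psi classes on the two factors. By dimension, $\int_D(\br^\ast\widehat\psi_\bullet)^a(\br^\ast\widehat\psi_\star)^b$ vanishes unless $(a,b)$ is the top bidegree $(r_1-1,r_2-1)$, and in that case Lemma~\ref{lem:ev} and Proposition~\ref{prop:formula} applied on each factor identify the integral with $H_0(\bx_1)H_0(\bx_2)/(r_1!\,r_2!)$ up to a power of $\delta$ from the gerbe.

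This top-degree constraint forces the binomial index $j$ in the expansion of $(\widehat\psi_\bullet+\widehat\psi_\star)^{k-1}$ to take the unique value $j=k-r_2$, restricting $k$ to the range $r_2\le k\le r-1$; the resulting series $\sum_{k=r_2}^{r-1}\binom{r-1}{k}\binom{k-1}{k-r_2}(-1)^{k}$ should collapse via a Vandermonde-type identity to a single multinomial coefficient $\binom{r}{r_1}$, and the sign $(-1)^{r_1-1}$ will arise from the parity of the $\widehat\psi_\bullet$ exponent $r_1-1$ in the surviving monomial. Comparison with the Shadrin-Shapiro-Vainshtein wall-crossing formula \cite{ssv:gz}, which in genus zero expresses $WC_I(\bx)$ as an explicit constant multiple of $\delta\,H_0(\bx_1)H_0(\bx_2)$, then yields the claimed equality.

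The hard part will be the careful bookkeeping of signs, factorial factors and the $\delta$-contribution from the gerbe at the node; in particular one must identify precisely which of the two components in Figure~\ref{fig:genus0} is assigned $r_1$ versus $r_2$, since Lemma~\ref{lem:side} pairs $\widehat\psi_\bullet$ with the mark $i$ and the sign $(-1)^{r_1-1}$ depends crucially on this choice. A related subtle point is the interpretation of the definition $\tilde{\tilde{\psi}}_i=\tilde\psi_i-D$: for the formula to be dimensionally and numerically compatible with Lemma~\ref{lem:cor} and with the example of Section~\ref{sec:WC}, one should read it as the equation $x_i\tilde{\tilde{\psi}}_i=x_i\tilde\psi_i-D$, i.e.\ as removing the $\tfrac{1}{x_i}D$ correction of $\tilde\psi_i$ rather than $D$ itself when passing to the other chamber.
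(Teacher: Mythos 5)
Your overall strategy coincides with the paper's: binomially expand the difference, restrict each term to $D$ using Lemma~\ref{lem:side} together with the self-intersection formula $D|_D=-\br^\ast(\widehat\psi_\bullet+\widehat\psi_\star)$, extract the unique surviving bidegree $\widehat\psi_\bullet^{\,r_1-1}\widehat\psi_\star^{\,r_2-1}$, push forward using the product structure of $D$, and compare with the known genus-zero wall-crossing formula. Your reading of the definition of $\tilde{\tilde\psi}_i$ (the correction $D$ is subtracted from $x_i\tilde\psi_i$, not multiplied by $x_i$) also matches what the paper actually computes.

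However, there is a genuine gap at the decisive step. You assert that the alternating sum $\sum_{k=r_2}^{r-1}\binom{r-1}{k}\binom{k-1}{k-r_2}(-1)^{k}$ should collapse ``via a Vandermonde-type identity'' to a multinomial coefficient $\binom{r}{r_1}$, with the sign $(-1)^{r_1-1}$ arising separately from the parity of the $\widehat\psi_\bullet$ exponent. That is not correct: the sum is exactly a sign. In the paper's normalization, $\sum_{k=r_2}^{r-1}\binom{r-1}{k}\binom{k-1}{r_2-1}(-1)^{r-1-k}=(-1)^{r_1-1}$ (check $r_1=r_2=2$, $r=4$: $-3+2=-1$). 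The multinomial coefficient in $WC_I(\bx)$ is already produced entirely by the pushforward step, from the $1/r_1!$ and $1/r_2!$ of Lemma~\ref{lem:ev} applied to each factor against the global $r!$; if the combinatorial sum contributed an additional $\binom{r}{r_1}$, the two sides of \eqref{eq:nwc} would disagree by that factor. The collapse of this sum to a pure sign is precisely the content of the proposition --- it is what makes the naive single-divisor computation ``wrong only up to sign'' --- and it does not follow from Vandermonde. The paper proves it by recognizing the sum as the Beta-type integral $r_2\binom{r-1}{r_2}\int_0^1 t^{r_2-1}(t-1)^{r_1-1}\,dt$ and integrating by parts $r_1-1$ times. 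Without this identity (or an equivalent), your argument only shows that the left-hand side is some unspecified combinatorial multiple of $\delta\,H_0(\bx_1)H_0(\bx_2)$, not the stated equality.
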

\begin{proof} We evaluate the intersection in parenthesis in the left hand side of \eqref{eq:nwc}:
\begin{align}
\nonumber (x_i{\tilde{\psi}_i})^{r-1}-(x_i{\tilde{\psi}_i}-D)^{r-1}&=  -\sum_{k=1}^{r-1}{{r-1}\choose{k}}(-1)^kD^k(x_i{\tilde{\psi}_i})^{r-1-k}\\
\nonumber & =  \sum_{k=1}^{r-1}{{r-1}\choose{k}}(-1)^{r-1-k}D\br^\ast(\widehat\psi_\bullet)(\br^\ast(\widehat\psi_\bullet) +\br^\ast(\widehat\psi_\star))^{k-1}\\
 & =  \left(\sum_{k=r_2}^{r-1}{{r-1}\choose{k}}{{k-1}\choose{r_2-1}} (-1)^{r-1-k} \right)D\br^\ast(\widehat\psi_\bullet)^{r_1-1}\br^\ast(\widehat\psi_\star)^{r_2-1} \label{gzero}
\end{align}
\end{proof}
We make the claim that the summation of those combinatorial coefficients is just a fancy way of saying $(-1)^{r_1-1}$. Then, multiplying \eqref{gzero} by $r!$ and pushing forward via $\stab$, we obtain
\beq
(-1)^{r_1-1}\delta {{r-1}\choose{r_1}}H_0(x_I+\delta)H_0(x_{I^c}-\delta)= (-1)^{r_1-1}WC_I(\bx).
\eeq

To prove the claim, it suffices to observe that the summation in parenthesis in \eqref{gzero} equals
\beq
r_2{{r-1}\choose{r_2}}\int_0^1t^{r_2-1}(t-1)^{r_1-1}dt.
\eeq
Integrating by parts $r_1-1$ times evaluates this integral at $(-1)^{r_1-1}$. 

\bibliographystyle{amsalpha}             
\bibliography{geomWC}       
\end{document}